\theoremstyle{plain}
\newtheorem{thm}{Theorem}[section]
\newtheorem{lemma}[thm]{Lemma}
\newtheorem{prop}[thm]{Proposition}
\newtheorem{cor}[thm]{Corollary}
\theoremstyle{remark}
 \def\a{{\alpha}}
 \def\b{{\beta}}
 \def\k{{\kappa}}
 \def\l{{\lambda}}
 \def\la{{\langle}}
 \def\ra{{\rangle}}
 \def\cb{{\mathbf c}}
 \def\xb{{\mathbf x}}
 \def\Kb{{\mathbf K}}
 \def\CO{{\mathcal O}}
 \def\CP{{\mathcal P}}
 \def\CM{{\mathcal M}}
 \def\CV{{\mathcal V}}
 \def\KK{{\mathbb K}}
 \def\NN{{\mathbb N}}
 \def\PP{{\mathbb P}}
 \def\QQ{{\mathbb Q}}
 \def\RR{{\mathbb R}}
  \newcommand{\tr}{{\mathsf {tr}}}
        \def\sspan{\operatorname{span}}
\title[Orthogonal polynomials for measures with mass points]
{Orthogonal polynomials in several variables for measures with mass points}
\author[A. M. Delgado]{Antonia M. Delgado}
\address[A. M. Delgado]{Departamento de Matem\'atica Aplicada,
Universidad de Granada, Spain.}
\author[L. Fern\'{a}ndez]{Lidia Fern\'{a}ndez}
\address[L. Fern\'{a}ndez]{Departamento de Matem\'atica Aplicada,
and Instituto Carlos I de F\'{\i}sica Te\'orica y Computacional,
Universidad de Granada, Spain}
\author[T. E. P\'erez]{Teresa E. P\'erez}
\address[T. E. P\'erez]{Departamento de Matem\'atica Aplicada,
and Instituto Carlos I de F\'{\i}sica Te\'orica y Computacional,
Universidad de Granada, Spain}
\author[M. A. Pi\~{n}ar]{Miguel A. Pi\~{n}ar$^{*}$}
\address[M. A. Pi\~{n}ar]{Departamento de Matem\'atica Aplicada,
and Instituto Carlos I de F\'{\i}sica Te\'orica y Computacional,
Universidad de Granada, Spain}
\author[Y. Xu]{Yuan Xu}
\address[Y. Xu]{Department of Mathematics,
Universty of Oregon, USA}
\thanks{$^{*}$ Corresponding author. E--mail: mpinar@ugr.es}
\thanks{Partially supported by Ministerio de Ciencia y Tecnolog\'{\i}a
        (MCYT) of Spain and by the European Regional Development Fund
        (ERDF) through the grant MTM 2008--06689--C02--02, and Junta de
        Andaluc\'{\i}a, Grupo de Investigaci\'on FQM 0229.}
\begin{document}

\keywords{Multivariate orthogonal polynomials, Dirac mass}

\subjclass[2000]{42C05; 33C50}

\begin{abstract}
Let $d\nu$ be a measure in $\mathbb{R}^d$ obtained from adding a set of mass points to
another measure $d\mu$. Orthogonal polynomials in several variables associated with $d\nu$
can be explicitly expressed in terms of orthogonal polynomials associated with $d\mu$, so are
the reproducing kernels associated with these polynomials. The explicit formulas that are
obtained are further specialized in the case of Jacobi measure on the simplex, with mass
points added on the vertices, which are then used to study the asymptotics kernel
functions for $d\nu$.
\end{abstract}

\maketitle

\section{Introduction}
\setcounter{equation}{0}

Let $d\mu$ be a measure on $\RR^d$ with all finite moments and we assume that $d \mu$
is positive definite in the sense that $\int_{\RR^d} p^2(x)  d\mu >0$ for every $p \in \Pi^d$,
$p \ne 0$, where $\Pi^d$ denotes the space of real polynomials in $d$--variables. Let
$\langle \cdot,\cdot \rangle_\mu$ denote the inner product defined by
\begin{equation} \label{ipd1}
     \langle p, q \rangle_\mu : = \int_{\RR^d} p(x)q(x) d\mu(x), \qquad p, q \in \Pi^d.
\end{equation}
Then orthogonal polynomials of several variables with respect to $\langle p,q\rangle_\mu$
exist. Let $N \ge 1$ be a positive integer and let $\xi_1, \xi_2, \ldots, \xi_N$ be distinct
points in $\RR^d$. Let $\Lambda$ be a positive definite matrix of size $N \times N$.
We define a new inner product $\langle \cdot,\cdot \rangle_\nu$ by
\begin{equation} \label{ipd2}
 \langle p, q \rangle_\nu : = \langle p, q \rangle_\mu
      + (p(\xi_1), p(\xi_2), \ldots, p(\xi_N)) \Lambda (q(\xi_1), q(\xi_2), \ldots, q(\xi_N))^\tr,
\end{equation}
where the superscript $\tr$ indicates the transpose, which can be defined via an integral
as in \eqref{ipd1} against a measure $d\nu$ that is obtained from adding $N$ mass points
to $d\mu$. A typical example is when $\Lambda$ is a diagonal matrix with positive entries.
The purpose of this paper is to study orthogonal polynomials with respect to the new
inner product $\langle \cdot,\cdot \rangle_\nu$.

In the case of one--variable, the first study of orthogonal polynomials for measures with
mass points was carried out, as far as we know, by Uvarov (\cite{U}), who gave a short
discussion on the case of adding a finite set of mass points to a measure and showed
how to express the orthogonal polynomials with respect to the new measure in terms
of those with respect to the old one. The problem was later revitalized by A. M. Krall
(\cite{K1}), who considered orthogonal polynomials for measures obtained by adding
mass points at the end of the interval on which a continuous measure lives. The case
of Jacobi measure with additional mass points at the end of $[-1,1]$ was studied in
\cite{Koor}, where explicit formulas of orthogonal polynomials were constructed. For
Jacobi weight with multiple mass points, it is possible to study asymptotic properties
of orthogonal polynomials  \cite{GPRV}. In the case of several variables, however,
only the case of $N =1$ has been studied \cite{FPPX}.

Our main results contain explicit formulas that express orthogonal polynomials and
reproducing kernels with respect to $\la\cdot, \cdot \ra_\nu$ in terms of those with
respect to $\la\cdot,\cdot\ra_\mu$. These results are stated and proved in Section 2.
As an example, we consider the case of Jacobi weight function on the simplex in $\RR^d$,
with mass points added at the vertices, for which our formulas can be further specified
and expressed in terms of the classical Jacobi polynomials. The result is then used to
study the asymptotic expansion of the Christoffel functions with respect to
$\la\cdot, \cdot\ra_\nu$.

\section{Orthogonal polynomials for measures with mass points}
\setcounter{equation}{0}

We start with a short subsection on necessary definitions, for which we follow essentially
\cite{DX}, and prove our main results in the second subsection.

\subsection{Preliminary}
Through this paper, we will use the standard multi--index notation. Let $\mathbb{N}_0$
denote the set of nonnegative integers. For a multi--index $\alpha=(\alpha_1,\dots,\alpha_d)
\in\mathbb{N}_0^d$ and $x=(x_1,\dots,x_d) \in \RR^d$, a monomial in $d$ variables is
defined as $x^{\alpha}=x_1^{\alpha_1}\cdots x_d^{\alpha_d}$. The integer
$|\alpha|=\alpha_1+ \dots+\alpha_d$ is called the \emph{total degree} of $x^{\alpha}$.
We denote by $\CP_n^d$ the space of homogeneous polynomials of degree $n$ in
$d$--variables, $\CP_n^d := \sspan\{x^\a: |\a| =n\}$, and denote by $\Pi_n^d$
the space of polynomials of total degree at most $n$. The collection of all polynomials
in $d$--variables is $\Pi^d$. It is well known that
$$
\dim \Pi_n^d = \binom{n+d}{n} \quad \hbox{\rm and}\quad \dim
\mathcal{P}_n^d = \binom{n+d-1}{n}:=r_n^d.
$$

Let $\langle \cdot,\cdot \rangle_\mu$ be the inner product defined in \eqref{ipd1}.
A polynomial $p \in \Pi_n^d$ is {\it orthogonal} with respect to \eqref{ipd1} if
$$
    \langle p,\, q\rangle_\mu = 0, \qquad \forall q\in \Pi_{n-1}^d.
$$
Our assumption that $d\mu$ is positive definite implies that orthogonal polynomials
with respect to $\langle \cdot,\cdot \rangle_\mu$ exist. Let us denote by $\mathcal{V}_n^d$
the space of orthogonal polynomials of total degree $n$. It follows that
$\dim \mathcal{V}_n^d =  r_n^d$. Let $\{P_\alpha^n\}_{|\alpha|=n}$ denote a basis of
$\mathcal{V}_n^d$. It is often convenient to use vector notations introduced in \cite{Ko1}
and \cite{X93}. Let $\{\alpha_1, \a_2,\ldots, \a_{r_n^d}\}$ be an enumeration of the set
$\{\alpha \in \NN_0^d: |\alpha| =n\}$ according to a fixed monomial order, say the
lexicographical order or the reversed lexicographical order. Then the basis
$\{P_\alpha^n\}_{|\alpha|=n}$ can be written as
$$
 \mathbb{P}_n= \left \{P^n_{\alpha_1}, P^n_{\alpha_2}, \ldots,  P^n_{\alpha_{r_n^d}} \right \}.
$$
We will treat $\PP_n$ both as a set of functions and as a {\it column} vector of functions.
As column vectors, the orthogonality of $\{P_{\a_j}^n\}$ can be expressed as
$$
  \la \PP_n, \PP_m^\tr \ra_\mu = \int_{\RR^d} \PP_n(x) \PP_m^\tr(x) d\mu = \begin{cases}
        0, & \hbox{if $n \ne m$}, \\ H_n, & \hbox{if $n = m$}, \end{cases}
$$
where the superscript denotes the transpose (so that $\PP^\tr$ is a row vector) and
$H_n$ is a matrix of size $r_n^d \times r_n^d$, necessarily symmetric, and in
fact a positive definite matrix by our assumption on $d\mu$. For convenience, we shall
call the system $\{\mathbb{P}_n\}_{n=0}^\infty = \{P_\alpha^n:  |\a| =n, n=0, 1,\ldots\}$ an
{\it orthogonal polynomial system} (OPS). If $H_n$ is the identity matrix, then
$\{P_\a^n: |\a|=n\}$ is an orthonormal basis for $\CV_n^d$ and the OPS is called an \emph{orthonormal} polynomial system.

Likewise, we can write $\xb^n : = \{x^\a: |\a| =n\} = \{x^{\a_1}, x^{\a_2}, \ldots, x^{\a_{r_n^d}} \}$
and regard it as a {\it column} vector. Since each element in $\PP_n$ is a polynomial of degree
$n$, it can be written as a sum of monomials, which, in vector notation, becomes
$$
\mathbb{P}_n = \sum_{j=0}^n G_{j,n}\,  \xb^j, \qquad \hbox{where} \quad G_{j,n} \in
  \mathcal{M}_{r_n^d\times r_j^d},
$$
in which $\CM_{p \times q}$ denotes the set of real matrices of size $p \times q$. In
particular, $G_{n,n}$ is a square matrix and it is necessarily invertible since $\PP_n$
is a basis of $\CV_n^d$. We call $G_{n,n}$ the leading coefficient of $\PP_n$.

With respect to $d\mu$, the reproducing kernel of $\CV_n^d$, denoted by $P_n(d\mu;x,y)$,
is defined by $\la P_n(d\mu; x, \cdot), p\ra_\mu = p(x)$, $p\in \CV_n^d$. In terms of
a basis $\PP_n$ of $\CV_n^d$, it satisifes
$$
    P_n(d\mu;x,y) =  \mathbb{P}^\tr_n(x)\, H^{-1}_n\, \mathbb{P}_n(y) \quad\hbox{with}
      \quad H_n = \la \PP_n, \PP_n\ra_\mu.
$$
Similarly, the reproducing kernel of $\Pi_n^d$, denoted by $K_n(d\mu;x,y)$, is defined by
$\la K_n(d\mu; x, \cdot), p\ra_\mu = p(x)$, $p\in \Pi_n^d$, and satisfies
$$
      K_n(d\mu;x,y) = \sum_{j=0}^n P_j(d\mu;x,y), \qquad n\ge 0.
$$
Since the definitions of $P_n(d\mu;x,y)$ and $K_n(d\mu;x,y)$  are independent of the
choice of a particular basis, (see \cite[Theorem 3.5.1]{DX}), it is often more
convenient to work with an orthonormal basis. The kernel $K_n(d\mu;x,y)$ plays an
important role in studying Fourier orthogonal expansions, as it is the kernel function of
the partial sum operator. The reciprocal of $K_n(d\mu;x,x)$ is called Christoffel function,
denoted by $\Lambda_n(x)$, and it satisfies
$$
\Lambda_n(x) : = \frac{1}{K_n(d\mu;x,x)} = \inf_{P(x) =1, P\in\Pi_n^d} \int_{\RR^d} |P(y)|^2 d\mu(y).
$$

\subsection{Main results}
Our goal is to study orthogonal polynomials with respect to the inner product
$\la \cdot,\cdot \ra_\nu$ defined in \eqref{ipd2}. Let us recall that $\Lambda$ is a given
positive definite matrix of order $N$ and $\{\xi_1, \xi_2, \ldots, \xi_N\}$ is a set of distinct
points in $\mathbb{R}^d$.  Introducing the notation
$$
 \mathbf{p}(\xi) = \left \{ p(\xi_1), p(\xi_2), \ldots, p(\xi_N) \right \},
$$
and regarding it also as a column vector, we can then rewrite the inner product
$\la \cdot,\cdot \ra_\nu$ in \eqref{ipd2} as
\begin{equation*}
(1.2') \hspace{1.5in} \hfill   \langle p, q \rangle_\nu = \langle p, q \rangle_\mu +\mathbf{p}(\xi)^\tr \, \Lambda \,\mathbf{q}(\xi),  \hspace{1.5in} \hfill
\end{equation*}
where $\langle \cdot, \cdot \rangle_\mu$ denotes the inner product defined in \eqref{ipd1}.
In the case that $\Lambda$ is a diagonal matrix, $\Lambda = \mathrm{diag}
\{\l_1,\ldots,\l_N\}$, the inner product $\la \cdot,\cdot \ra_\nu$ takes the form
\begin{equation} \label{sum-mass}
  \langle p, q \rangle_\nu = \langle p, q \rangle_\mu + \sum_{j=1}^N \l_j p(\xi_j) q(\xi_j).
\end{equation}

Our first result shows that orthogonal polynomials with respect to $\langle p, q \rangle_\nu$
can be derived in terms of those with respect to $\langle p, q \rangle_\mu$. The statement
and the proof of this result relies heavily on the vector--matrix notation. To facilitate the
study, we shall introduce several new notations.

Throughout this section, we shall fix $\PP_n$ as an orthonormal basis for $\CV_n^d$
associated with $d\mu$. We denote by  $\mathsf{P}_n(\xi)$ the matrix that has $\PP_n(\xi_i)$
as columns,
\begin{equation} \label{sP}
 \mathsf{P}_n(\xi):= \left(\mathbb{P}_n(\xi_1) | \mathbb{P}_n(\xi_2) | \ldots
   | \mathbb{P}_n(\xi_N)   \right) \in \mathcal{M}_{r_n^d \times N},
\end{equation}
denote by $\Kb_{n-1}$ the matrix whose entries are $K_{n-1}(d\mu;\xi_i,\xi_j)$,
\begin{equation} \label{cK}
\mathbf{K}_{n-1} := \big(K_{n-1}(d\mu;\xi_i,\xi_j) \big)_{i,j=1}^N \in \mathcal{M}_{N\times N},
\end{equation}
and, finally, denote by $\mathbb{K}_{n-1}(\xi,x)$ the vector of functions
\begin{equation} \label{sK}
  {\mathbb{K}_{n-1}(\xi,x)} = \left\{K _{n-1}(d\mu; \xi_1,x),  K_{n-1}(d\mu;\xi_2,x),
\ldots, K_{n-1}(d\mu;\xi_N,x)\right \},
\end{equation}
which we again regard as a column vector.

From the fact that $K_n(d\mu; x,y) - K_{n-1}(d\mu;x,y) = P_n(d\mu;x,y)$, we have immediately
the following relations,
\begin{align}
   \mathsf{P}_n^\tr(\xi) \PP_n (x) & = \KK_n(\xi,x) -   \KK_{n-1}(\xi,x),  \label{P-K1}\\
   \mathsf{P}_n^\tr(\xi) \mathsf{P}_n (\xi) & = \Kb_n - \Kb_{n-1},  \label{P-K2}
\end{align}
which will be used below. Let $I_N$ denote the identity matrix of order $N$.

\begin{lemma}
The matrix $I_N +  \Lambda\,{\mathbf{K}_{n-1}}$ is invertible.
\end{lemma}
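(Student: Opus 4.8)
The plan is to exploit the positive definiteness of $\Lambda$ together with the positive semidefiniteness of $\Kb_{n-1}$, reducing the invertibility of the (generally non-symmetric) product $\Lambda\,\Kb_{n-1}$ to a statement about a genuinely symmetric, positive definite matrix. Since $\Lambda$ is positive definite it is in particular invertible, so I would first factor
\begin{equation*}
  I_N + \Lambda\, \Kb_{n-1} = \Lambda \left( \Lambda^{-1} + \Kb_{n-1} \right).
\end{equation*}
Because $\Lambda$ is invertible, the left--hand side is invertible if and only if $\Lambda^{-1} + \Kb_{n-1}$ is, and the latter has the advantage of being symmetric.

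Next I would show that $\Lambda^{-1} + \Kb_{n-1}$ is positive definite. The matrix $\Lambda^{-1}$ is positive definite, being the inverse of a positive definite matrix. For $\Kb_{n-1}$ I would use the relation \eqref{P-K2} together with a telescoping sum: since $\mathsf{P}_k^\tr(\xi)\,\mathsf{P}_k(\xi) = \Kb_k - \Kb_{k-1}$ for each $k$ (with the convention $\Kb_{-1}=0$, as the reproducing kernel of the trivial space $\Pi_{-1}^d$), summing over $k=0,1,\ldots,n-1$ gives
\begin{equation*}
  \Kb_{n-1} = \sum_{k=0}^{n-1} \mathsf{P}_k^\tr(\xi)\, \mathsf{P}_k(\xi).
\end{equation*}
Each summand has the form $A^\tr A$ and is therefore positive semidefinite, so $\Kb_{n-1}$ is positive semidefinite. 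Equivalently, one may write $K_{n-1}(d\mu;\cdot,\cdot)$ in terms of the orthonormal basis $\PP_k$ and check directly that, for any $c=(c_1,\ldots,c_N)^\tr$, one has $c^\tr \Kb_{n-1}\, c = \sum_{k=0}^{n-1} \big\| \sum_{i=1}^N c_i\, \PP_k(\xi_i)\big\|^2 \ge 0$.

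Combining the two facts, for any nonzero $v \in \RR^N$ we get $v^\tr(\Lambda^{-1} + \Kb_{n-1})v = v^\tr \Lambda^{-1} v + v^\tr \Kb_{n-1} v > 0$, since the first term is strictly positive and the second is nonnegative. Hence $\Lambda^{-1} + \Kb_{n-1}$ is positive definite, and in particular invertible; therefore $I_N + \Lambda\,\Kb_{n-1} = \Lambda(\Lambda^{-1} + \Kb_{n-1})$ is a product of two invertible matrices and is invertible.

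The one subtlety worth flagging is the non--symmetry of $\Lambda\,\Kb_{n-1}$: a direct spectral argument on the product itself is awkward precisely because it need not be symmetric. The crucial move is therefore the factorization that isolates the symmetric, positive definite matrix $\Lambda^{-1}+\Kb_{n-1}$; once this is in place, verifying positive semidefiniteness of $\Kb_{n-1}$ from \eqref{P-K2} and assembling the conclusion are routine.
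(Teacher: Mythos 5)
Your proof is correct and follows essentially the same route as the paper: both factor out $\Lambda$ to reduce the problem to the symmetric matrix $\Lambda^{-1}+\Kb_{n-1}$ and then argue positive definiteness of that sum. The only (welcome) difference is that you claim only positive \emph{semi}definiteness of $\Kb_{n-1}$, which is all the argument needs and is in fact the accurate statement for small $n$ (e.g.\ $\Kb_0$ has rank one, so the paper's assertion that $\Kb_{n-1}$ is positive definite is an overstatement in that edge case, though harmless).
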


\begin{proof}
First we show that the matrix $\Kb_{n-1}$ is positive definite, By the definition of
$K_n(d\mu;\cdot,\cdot)$, for every $\cb \in \RR^N$, $\cb \ne 0$, we have
$$
  \cb^\tr \Kb_{n-1} \cb = \sum_{|\a| \le n-1}  \sum_{i,j =1}^N c_i c_j P_\a(\xi_i) P_\a(\xi_j)
         =  \sum_{|\a| \le n-1} \Big| \sum_{j=0}^N c_j P_\a(\xi_j) \Big|^2 > 0,
$$
so that $\Kb_{n-1}$ is positive definite. The matrix $\Lambda$ is also positive definite,
by assumption, so that it is invertible. Since $\Lambda^{-1} (I_N + \Lambda \Kb_{n-1}) =
\Lambda^{-1} + \Kb_{n-1}$, we see that it is positive definite as well, hence invertible.
Consequently, $I_N + \Lambda \Kb_{n-1}$ is invertible.
\end{proof}

We are now ready to state and prove our first main result.

\begin{thm} \label{main-thm}
Define a polynomial system $\{\mathbb{Q}_n\}_{n\ge 0}$ by $\mathbb{Q}_0(x) := \mathbb{P}_0(x)$ and
\begin{equation} \label{ex-expl}
\mathbb{Q}_n(x) = \mathbb{P}_n(x) - {\mathsf{P}_n(\xi)}\,(I_N +
\Lambda\,{\mathbf{K}_{n-1}})^{-1}\,\Lambda \,
{\mathbb{K}_{n-1}(\xi,x)}, \qquad n\ge 1.
\end{equation}
Then $\{\mathbb{Q}_n\}_{n\ge 0}$ is a sequence of orthogonal polynomials
with respect to $\la \cdot,\cdot\ra_\nu$ defined in \eqref{ipd2}. Conversely, any
sequence of orthogonal polynomials with respect to \eqref{ipd2} can be expressed
as in \eqref{ex-expl}.
\end{thm}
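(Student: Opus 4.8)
The plan is to prove the direct statement by verifying that each $\mathbb{Q}_n$ is orthogonal to all lower degrees with respect to $\la\cdot,\cdot\ra_\nu$, and then to deduce the converse from the uniqueness of the orthogonal subspace together with the covariance of formula \eqref{ex-expl} under a change of $d\mu$--orthogonal basis. First I would record two structural facts. Since each component of $\KK_{n-1}(\xi,x)$ is a polynomial of degree $n-1$ in $x$, the correction term in \eqref{ex-expl} has degree at most $n-1$; hence $\mathbb{Q}_n$ has degree $n$ with the same (invertible) leading coefficient $G_{n,n}$ as $\PP_n$. Consequently $\{\mathbb{Q}_0,\dots,\mathbb{Q}_{n-1}\}$ is a basis of $\Pi_{n-1}^d$, so that establishing orthogonality of the whole system reduces to showing $\la \mathbb{Q}_n, q\ra_\nu = 0$ for every $q\in\Pi_{n-1}^d$.

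For this central computation I would split the inner product according to $(1.2')$ into its $d\mu$--part and its mass--point part. For the $d\mu$--part, $\la\PP_n,q\ra_\mu = 0$ kills the leading term, while the reproducing property of $K_{n-1}(d\mu;\cdot,\cdot)$ gives $\la \KK_{n-1}(\xi,\cdot),q\ra_\mu = \mathbf{q}(\xi)$; hence $\la\mathbb{Q}_n,q\ra_\mu = -\mathsf{P}_n(\xi)(I_N+\Lambda\Kb_{n-1})^{-1}\Lambda\,\mathbf{q}(\xi)$. For the mass part I need the matrix $\mathsf{Q}_n(\xi)$ whose columns are $\mathbb{Q}_n(\xi_i)$: evaluating \eqref{ex-expl} at the nodes and stacking columns, the vectors $\KK_{n-1}(\xi,\xi_i)$ assemble exactly into $\Kb_{n-1}$, so that $\mathsf{Q}_n(\xi) = \mathsf{P}_n(\xi)\big(I_N-(I_N+\Lambda\Kb_{n-1})^{-1}\Lambda\Kb_{n-1}\big)$. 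The algebraic identity $I_N-(I_N+A)^{-1}A = (I_N+A)^{-1}$ with $A=\Lambda\Kb_{n-1}$ then collapses this to $\mathsf{Q}_n(\xi) = \mathsf{P}_n(\xi)(I_N+\Lambda\Kb_{n-1})^{-1}$, and the resulting mass contribution $\mathsf{Q}_n(\xi)\Lambda\,\mathbf{q}(\xi)$ cancels precisely the $d\mu$--part computed above. This cancellation is the main obstacle: it is where the correctness of the specific matrix $(I_N+\Lambda\Kb_{n-1})^{-1}\Lambda$ (invertible by the preceding Lemma) is forced, and keeping the transposes and the column--stacking bookkeeping straight is the one place genuine care is required.

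For the converse, I would first note that $\la\cdot,\cdot\ra_\nu$ is positive definite, being $\la\cdot,\cdot\ra_\mu$ plus a positive semidefinite form, so a $\nu$--orthogonal system exists and the space $\mathcal{V}_n^{d,\nu}:=\{p\in\Pi_n^d:\la p,q\ra_\nu=0\ \forall q\in\Pi_{n-1}^d\}$ has dimension $r_n^d$. By the direct part $\mathbb{Q}_n$ is a basis of this space, so any orthogonal system $\{\mathbb{U}_n\}$ for $\la\cdot,\cdot\ra_\nu$ satisfies $\mathbb{U}_n = M_n\mathbb{Q}_n$ for some invertible $M_n$. Finally I would observe that the construction \eqref{ex-expl} uses only the $d\mu$--orthogonality of $\PP_n$ (not its normalization) and is covariant: replacing $\PP_n$ by the $d\mu$--orthogonal basis $M_n\PP_n$ leaves the basis--free quantities $\Kb_{n-1}$ and $\KK_{n-1}(\xi,x)$ unchanged and multiplies $\mathsf{P}_n(\xi)$ on the left by $M_n$, so the right--hand side of \eqref{ex-expl} becomes $M_n\mathbb{Q}_n=\mathbb{U}_n$. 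Hence $\{\mathbb{U}_n\}$ is of the asserted form, completing the proof.
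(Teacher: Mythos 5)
Your proposal is correct, but it runs the argument in the opposite direction from the paper, and the two are worth contrasting. The paper \emph{assumes} an OPS $\{\QQ_n\}$ for $\la\cdot,\cdot\ra_\nu$ normalized to have the same leading coefficient as $\PP_n$, expands $\QQ_n-\PP_n$ in the $\PP_j$, computes the coefficient matrices $M_j^n=-\mathsf{Q}_n(\xi)\Lambda\mathsf{P}_j^\tr(\xi)$ from the orthogonality conditions, telescopes via \eqref{P-K1} to produce $\KK_{n-1}(\xi,x)$, evaluates at the nodes to get the linear system $\mathsf{Q}_n(\xi)=\mathsf{P}_n(\xi)-\mathsf{Q}_n(\xi)\Lambda\Kb_{n-1}$, and solves it; the forward direction is then dispatched with the remark that ``the above proof shows'' orthogonality. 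You instead verify the forward direction head-on, splitting $\la\QQ_n,q\ra_\nu$ into its $d\mu$-part (handled by the reproducing property $\la\KK_{n-1}(\xi,\cdot),q\ra_\mu=\mathbf q(\xi)$) and its mass part (handled by evaluating \eqref{ex-expl} at the nodes and the identity $I_N-(I_N+A)^{-1}A=(I_N+A)^{-1}$), and you get the converse from a dimension count for $\mathcal V_n^{d,\nu}$ plus the covariance of \eqref{ex-expl} under $\PP_n\mapsto M_n\PP_n$. The two routes rest on the same two computations --- the reproducing/telescoping property of $\KK_{n-1}$ and the closed form $\mathsf{Q}_n(\xi)=\mathsf{P}_n(\xi)(I_N+\Lambda\Kb_{n-1})^{-1}$ --- but the paper's version shows how the formula is \emph{discovered}, while yours gives a fully explicit proof of the direct claim (which the paper leaves to a reversibility remark) and a more careful justification of the ``any sequence of orthogonal polynomials'' clause, which the paper handles only implicitly through its leading-coefficient normalization.
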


\begin{proof}
Let us assume that $\{\QQ_n\}_{n\ge 0}$ is an OPS with respect to $\la \cdot, \cdot \ra_\nu$
and $\QQ_n$ has the same leading coefficient as $\PP_n$, which implies, in particular, that
$\QQ_0$ is a constant and $\QQ_0 = \PP_0$. We show that $\QQ_n$ satisfies \eqref{ex-expl}.
By the assumption, the components of $\mathbb{Q}_n-\mathbb{P}_n$ are elements in
$\Pi_{n-1}^d$ for $n\ge 1$. Since $\{\mathbb{P}_n\}_{n \ge 0}$ is a basis of $\Pi^d$, we
can express these components as linear combinations of orthogonal polynomials in
$\PP_0, \PP_1, \ldots, \PP_{n-1}$. In vector-matrix notation, this means that
$$
\mathbb{Q}_n(x) = \mathbb{P}_n(x) +  \sum_{j=0}^{n-1} M_j^n\, \mathbb{P}_j(x),
$$
where $M_j^n$ are matrices of size $r_n^d\times r_j^d$. These coefficient matrices can
be determined from the orthonormality of $\mathbb{P}_n$ and $\QQ_n$. Indeed, by the
orthogonality of $\QQ_n$, $\la \QQ_n, \PP_j \ra_\nu =0$  for $0 \le j \le n-1$, which shows,
by the definition of $\la \cdot,\cdot \ra_\nu$ and the fact that $\PP_j$ is orthonormal,
$$
    M_j^n = \langle \mathbb{Q}_n, \mathbb{P}^\tr_j \rangle_\mu =
 -  \mathsf{Q}_n(\xi)^\tr  \Lambda \mathsf{P}_n(\xi),
$$
where $\mathsf{P}_n(\xi)$ is defined as in \eqref{sP} and $\mathsf{Q}_n(\xi)=
\left\{\mathbb{Q}_n(\xi_1) | \mathbb{Q}_n(\xi_2) | \ldots | \mathbb{Q}_n(\xi_N)\right \}$ in
the analogous matrix with $\QQ_n(\xi_i)$ as its column vectors. Consequently, we obtain
\begin{align} \label{QnPj}
\mathbb{Q}_n(x) & = \mathbb{P}_n(x) -  \sum_{j=0}^{n-1} \mathsf{Q}_n(\xi)\,\Lambda
 \mathsf{P}^\tr_j(\xi) \, \mathbb{P}_j(x) \\
    & = \mathbb{P}_n(x) -  \mathsf{Q}_n(\xi)\,\Lambda \,\mathbb{K}_{n-1}(\xi,x). \notag
\end{align}
where the second equation follows from the relation \eqref{P-K1}, which leads to a
telescoping sum that sums up to $\mathbb{K}_{n-1}(\xi,x)$. Setting $x=\xi_i$, we obtain
$$
\mathbb{Q}_n(\xi_i) = \mathbb{P}_n(\xi_i) -  \mathsf{Q}_n(\xi)\,\Lambda \,\mathbb{K}_{n-1}(\xi,\xi_i), \quad 1 \le i \le N,
$$
which leads to, by the definition of $\Kb_{n-1}$ at (\ref{cK}), that
$$
   \mathsf{Q}_n(\xi) = \mathsf{P}_n(\xi) -  \mathsf{Q}_n(\xi)\,\Lambda \,\mathbf{K}_{n-1}.
$$
Solving for $\mathsf{Q}_n (\xi)$ from the above equation gives
\begin{equation}\label{qn(c)}
    \mathsf{Q}_n(\xi) = \mathsf{P}_n(\xi) (I_N + \,\Lambda \,\mathbf{K}_{n-1})^{-1}.
\end{equation}
Substituting this expression into \eqref{QnPj} establishes (\ref{ex-expl}).

Conversely, if we define polynomials $\mathbb{Q}_n$ by \eqref{ex-expl}, the above proof
shows that $\mathbb{Q}_n$ is orthogonal with respect to $\langle \cdot, \cdot \rangle_\nu$.
Since $\mathbb{Q}_n$ and $\mathbb{P}_n$ have the same leading coefficient, it is evident
that $\{\mathbb{Q}_n \}_{n \ge 0}$ is an OPS in $\Pi^d$.
\end{proof}

Let $\{\mathbb{Q}_n\}_{n\ge0}$ be an OPS with respect to \eqref{ipd2} as in
Theorem \ref{main-thm}. In general, $\QQ_n$ is not orthonormal. We denote, in the
rest of this section,
$$
H_n := \langle \mathbb{Q}_n, \mathbb{Q}^\tr_n \rangle_\nu.
$$
Then $H_n$ is a positive definite matrix. It turns out that both $H_n$ and $H_n^{-1}$ can
be expressed in terms of matrices that involve only $\{\PP_j\}_{j \ge 0}$.

\begin{prop} \label{prop.3.2}
For $n \ge 0$,
\begin{align}
   H_n & = I_{r_n^d} + \mathsf{P}_n(\xi) (I_N + \,\Lambda \,\mathbf{K}_{n-1})^{-1} \Lambda \mathsf{P}_n^\tr(\xi),  \label{directa} \\
  H^{-1}_n & = I_{r_n^d} - \mathsf{P}_n(\xi) (I_N + \,\Lambda \,\mathbf{K}_{n})^{-1} \Lambda \mathsf{P}_n^\tr(\xi). \label{inversa}
\end{align}
\end{prop}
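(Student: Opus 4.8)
The plan is to compute $H_n = \la \QQ_n, \QQ_n^\tr \ra_\nu$ directly from the explicit formula \eqref{ex-expl}, and then to establish \eqref{inversa} by verifying that the proposed expression for $H_n^{-1}$ multiplies $H_n$ to the identity. First I would use \eqref{qn(c)}, namely $\mathsf{Q}_n(\xi) = \mathsf{P}_n(\xi)(I_N + \Lambda \Kb_{n-1})^{-1}$, together with \eqref{QnPj} written as $\QQ_n(x) = \PP_n(x) - \mathsf{Q}_n(\xi)\Lambda \KK_{n-1}(\xi,x)$. To compute $H_n$, I would expand
$$
  \la \QQ_n, \QQ_n^\tr \ra_\nu = \la \QQ_n, \QQ_n^\tr \ra_\mu + \mathsf{Q}_n(\xi) \Lambda \mathsf{Q}_n^\tr(\xi),
$$
and evaluate the $\mu$--part using orthonormality of $\PP_n$ and the reproducing property of $K_{n-1}$. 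Since the components of $\QQ_n - \PP_n$ lie in $\Pi_{n-1}^d$ while $\PP_n \perp_\mu \Pi_{n-1}^d$, the cross terms in $\la \QQ_n, \QQ_n^\tr\ra_\mu$ collapse, leaving $I_{r_n^d}$ plus a term coming from $\la \KK_{n-1}(\xi,\cdot), \KK_{n-1}^\tr(\xi,\cdot)\ra_\mu$, which by the reproducing property equals $\Kb_{n-1}$. Assembling these pieces should yield a single expression in $\mathsf{P}_n(\xi)$, $\Lambda$, and $\Kb_{n-1}$.

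The key algebraic step is then to simplify that expression down to \eqref{directa}. Writing $A := (I_N + \Lambda \Kb_{n-1})^{-1}$ for brevity, I expect the $\mu$--part to contribute $I_{r_n^d} + \mathsf{P}_n(\xi) A^\tr \Lambda \Kb_{n-1} \Lambda A \mathsf{P}_n^\tr(\xi)$ and the mass--point part to contribute $\mathsf{P}_n(\xi) A^\tr \Lambda A \mathsf{P}_n^\tr(\xi)$. Adding these, the middle factor becomes $A^\tr \Lambda(I_N + \Kb_{n-1}\Lambda) A$; using $\Lambda(I_N + \Kb_{n-1}\Lambda) = (I_N + \Lambda\Kb_{n-1})\Lambda = A^{-1}\Lambda$ and noting that $A$ and $A^\tr$ are controlled by the symmetry of $\Kb_{n-1}$, this collapses to $\mathsf{P}_n(\xi) A \Lambda \mathsf{P}_n^\tr(\xi)$, which is precisely \eqref{directa}. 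Care must be taken tracking transposes, since $\Lambda \Kb_{n-1}$ is generally not symmetric even though $\Lambda$ and $\Kb_{n-1}$ both are; the identity $\Lambda(I_N + \Kb_{n-1}\Lambda)^{-1} = (I_N + \Lambda\Kb_{n-1})^{-1}\Lambda$ is the essential commutation relation that makes everything fit.

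For \eqref{inversa} I would verify the product $H_n H_n^{-1} = I_{r_n^d}$ by direct multiplication of \eqref{directa} with the claimed inverse. Writing $B := \mathsf{P}_n(\xi)(I_N + \Lambda \Kb_n)^{-1}\Lambda \mathsf{P}_n^\tr(\xi)$ for the subtracted term in \eqref{inversa}, the cross terms in $(I + \cdots)(I - B)$ will contain the factor $\mathsf{P}_n^\tr(\xi)\mathsf{P}_n(\xi) = \Kb_n - \Kb_{n-1}$ from relation \eqref{P-K2}. This is where \eqref{P-K2} does the real work: it converts a product of the two $\mathsf{P}_n(\xi)$ blocks into the difference $\Kb_n - \Kb_{n-1}$, allowing the terms involving $(I_N + \Lambda\Kb_{n-1})^{-1}$ and $(I_N + \Lambda\Kb_n)^{-1}$ to telescope against each other.

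The main obstacle I anticipate is the noncommutativity bookkeeping: because $\Lambda\Kb_{n-1}$ and $\Lambda\Kb_n$ are not symmetric, one cannot freely transpose or reorder factors, and the correctness of both \eqref{directa} and \eqref{inversa} hinges on repeatedly applying the commutation identity $\Lambda(I_N + \Kb\Lambda)^{-1} = (I_N + \Lambda\Kb)^{-1}\Lambda$ (valid for symmetric $\Kb$) to move $\Lambda$ across the resolvent factors. Once that identity is used systematically, the simplification in \eqref{directa} and the cancellation in \eqref{inversa} should both go through cleanly.
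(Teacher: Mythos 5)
Your argument is correct in outline, but for \eqref{directa} it takes a more computational route than the paper, while for \eqref{inversa} it coincides with the paper's. The paper proves \eqref{directa} in two lines by a shortcut that avoids expanding the quadratic form at all: since the components of $\QQ_n - \PP_n$ lie in $\Pi_{n-1}^d$ and $\QQ_n$ is $\nu$--orthogonal to $\Pi_{n-1}^d$, one may replace one factor by $\PP_n$, giving $H_n = \la \QQ_n, \QQ_n^\tr\ra_\nu = \la \QQ_n, \PP_n^\tr\ra_\nu = \la \QQ_n, \PP_n^\tr\ra_\mu + \mathsf{Q}_n(\xi)\Lambda\mathsf{P}_n^\tr(\xi) = I_{r_n^d} + \mathsf{P}_n(\xi)(I_N+\Lambda\Kb_{n-1})^{-1}\Lambda\mathsf{P}_n^\tr(\xi)$, with no reproducing--kernel computation and no commutation identity needed. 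You instead expand $\la\QQ_n,\QQ_n^\tr\ra_\nu$ fully in both arguments, which obliges you to evaluate $\la\KK_{n-1}(\xi,\cdot),\KK_{n-1}^\tr(\xi,\cdot)\ra_\mu = \Kb_{n-1}$ and then collapse the middle factor; this does work, but note that with $\mathsf{Q}_n(\xi)=\mathsf{P}_n(\xi)A$ the middle factor is $A\Lambda(I_N+\Kb_{n-1}\Lambda)A^\tr = \Lambda A^\tr = A\Lambda$, whereas the factor $A^\tr\Lambda(I_N+\Kb_{n-1}\Lambda)A$ you wrote has the transposes swapped and does not simplify to $A\Lambda$ as it stands; the collapse goes through only with the correct placement, which is exactly the bookkeeping hazard you flagged. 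For \eqref{inversa} your verification is the paper's: the product $H_n\bigl(I_{r_n^d}-\mathsf{P}_n(\xi)(I_N+\Lambda\Kb_n)^{-1}\Lambda\mathsf{P}_n^\tr(\xi)\bigr)$ reduces, via $\mathsf{P}_n^\tr(\xi)\mathsf{P}_n(\xi)=\Kb_n-\Kb_{n-1}$ from \eqref{P-K2}, to the telescoping resolvent identity \eqref{identity}. What the paper's shortcut buys in \eqref{directa} is the complete avoidance of the noncommutativity issues that your route must manage by repeated use of $\Lambda(I_N+\Kb\Lambda)^{-1}=(I_N+\Lambda\Kb)^{-1}\Lambda$.
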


\begin{proof}
Since $\PP_n$ is orthonormal, $\langle \mathbb{P}_n, \mathbb{P}^\tr_n \rangle_\mu
 = I_{r_n^d}$. From \eqref{ex-expl} and \eqref{qn(c)} we obtain
\begin{align*}
 H_n =& \langle \mathbb{Q}_n , \mathbb{Q}_n^\tr\rangle_\nu =
\langle \mathbb{Q}_n , \mathbb{P}_n^\tr\rangle_\nu = \langle
\mathbb{Q}_n , \mathbb{P}_n^\tr\rangle_\mu + \mathsf{Q}_n(\xi) \Lambda \mathsf{P}_n^\tr(\xi) \\
   =& I_{r_n^d} + \mathsf{P}_n(\xi) ( I_N + \,\Lambda \,\mathbf{K}_{n-1})^{-1}
   \Lambda \mathsf{P}_n^\tr(\xi),
\end{align*}
which proves (\ref{directa}).  In order to establish \eqref{inversa}, we need to verify that
$$
H_n ( I_{r_n^d} - \mathsf{P}_n(\xi) (I_N + \,\Lambda \,\mathbf{K}_{n})^{-1} \Lambda \mathsf{P}_n^\tr(\xi) ) =  I_{r_n^d},
$$
which, by \eqref{directa} and after simplification, reduces to the following equation,
\begin{align}\label{identity}
(I_N + \Lambda\,\mathbf{K}_{n-1})^{-1}\, & \Lambda\,   \mathsf{P}_n(\xi)^\tr \mathsf{P}_n(\xi)
    (I_N + \Lambda\,\mathbf{K}_{n})^{-1} \\
  & = (I_N + \Lambda\,\mathbf{K}_{n-1})^{-1} - (I_N +\Lambda\,\mathbf{K}_{n})^{-1}. \notag
\end{align}
Using \eqref{P-K2}, the above equation can be verified by a simple computation.
\end{proof}

Our next result gives explicit formulas for the reproducing kernels associated with
$\la \cdot,\cdot\ra_\nu$, which we denote by
\begin{equation*}
P_j(d\nu; x,y):= \mathbb{Q}^\tr_j(x)\, H^{-1}_j\, \mathbb{Q}_j(y) \quad \hbox{and} \quad
  K_n(d\nu; x,y):= \sum_{j=0}^n P_j(d\nu;x,y).
\end{equation*}

\begin{thm} \label{thm.3.3}
For $j\ge 0$,
\begin{align}
P_j(d\nu; x,y) =   P_j(d\mu; x,y)  & -
   \mathbb{K}_j^\tr(\xi,x) \,  (I_N + \,\Lambda \,\mathbf{K}_{j})^{-1}  \Lambda \, \mathbb{K}_j(\xi,y)  \label{Pk}\\
 & + \mathbb{K}_{j-1}^\tr(\xi,x) \,  (I_N + \,\Lambda \,\mathbf{K}_{j-1})^{-1}  \Lambda \, \mathbb{K}_{j-1}(\xi,y). \nonumber
\end{align}
Furthermore, for $n \ge 0$,
\begin{equation}\label{kernel}
K_n(d\nu; x,y)  = K_n(d\mu; x,y)  -
\mathbb{K}_n^\tr(\xi,x) \,  (I_N + \,\Lambda \,\mathbf{K}_{n})^{-1}  \Lambda \, \mathbb{K}_n(\xi,y).
\end{equation}
\end{thm}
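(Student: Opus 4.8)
The plan is to prove the kernel identity \eqref{kernel} first, directly from the defining property of a reproducing kernel, and then to read off \eqref{Pk} by differencing. This inverts the order in which the two formulas are stated, but it avoids expanding $\QQ_j^\tr(x)H_j^{-1}\QQ_j(y)$ through Theorem~\ref{main-thm} and Proposition~\ref{prop.3.2}. Denote by $\widetilde K_n(x,y)$ the right-hand side of \eqref{kernel}. Since $\la p,p\ra_\nu \ge \la p,p\ra_\mu > 0$ for $p \ne 0$, the inner product $\la\cdot,\cdot\ra_\nu$ is positive definite on $\Pi_n^d$, so the reproducing kernel of $\Pi_n^d$ for $\la\cdot,\cdot\ra_\nu$ is unique. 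It therefore suffices to check that $\widetilde K_n$ satisfies the two characterizing properties: that $\widetilde K_n(x,\cdot) \in \Pi_n^d$ and that $\la \widetilde K_n(x,\cdot), p\ra_\nu = p(x)$ for every $p \in \Pi_n^d$.

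The membership is immediate, since $K_n(d\mu;x,\cdot)$ and each component $K_n(d\mu;\xi_i,\cdot)$ of $\KK_n(\xi,\cdot)$ have degree at most $n$ in the second variable. For the reproducing property I would split $\la\cdot,\cdot\ra_\nu$ into its $\mu$-part plus the mass term $\mathbf f(\xi)^\tr\Lambda\,\mathbf p(\xi)$ and evaluate the two contributions to $\la\widetilde K_n(x,\cdot),p\ra_\nu$ separately. The ingredients are: (i) the $\mu$-reproducing property, $\la K_n(d\mu;x,\cdot),p\ra_\mu = p(x)$ and $\la K_n(d\mu;\xi_i,\cdot),p\ra_\mu = p(\xi_i)$; and (ii) the fact that evaluating $K_n(d\mu;x,\cdot)$ at the mass points returns the vector $\KK_n(\xi,x)$, while evaluating $\KK_n(\xi,\cdot)$ at the mass points returns the matrix $\Kb_n$, so that $\la \KK_n(\xi,\cdot),p\ra_\nu = (I_N+\Kb_n\Lambda)\,\mathbf p(\xi)$. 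Assembling these yields
\begin{align*}
\la \widetilde K_n(x,\cdot),p\ra_\nu
  &= \big(p(x) + \KK_n^\tr(\xi,x)\,\Lambda\,\mathbf p(\xi)\big) \\
  &\quad - \KK_n^\tr(\xi,x)\,(I_N+\Lambda\Kb_n)^{-1}\,\Lambda\,(I_N+\Kb_n\Lambda)\,\mathbf p(\xi),
\end{align*}
and the whole verification collapses to the single matrix identity $(I_N+\Lambda\Kb_n)^{-1}\Lambda(I_N+\Kb_n\Lambda) = \Lambda$, i.e. $\Lambda(I_N+\Kb_n\Lambda) = (I_N+\Lambda\Kb_n)\Lambda$, after which the two bracketed terms cancel and leave $p(x)$.

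With \eqref{kernel} established, \eqref{Pk} follows by subtracting the $n=j-1$ instance from the $n=j$ instance, using $P_j(d\nu;x,y) = K_j(d\nu;x,y) - K_{j-1}(d\nu;x,y)$ with the convention $K_{-1}\equiv 0$ (so $\KK_{-1}=0$, $\Kb_{-1}=0$, which also handles $j=0$): the $\mu$-kernels difference to $P_j(d\mu;x,y)$ and the two mass terms remain as the $j$ and $j-1$ contributions in \eqref{Pk}. I expect the main obstacle to be bookkeeping rather than ideas, namely keeping the two ``$K$'' objects apart---the vector kernel $\KK_n(\xi,\cdot)$ in the polynomial variable versus the Gram matrix $\Kb_n$ of kernel values at the mass points---and noticing that step (ii) produces exactly the factor $(I_N+\Kb_n\Lambda)$ that cancels $(I_N+\Lambda\Kb_n)^{-1}$. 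As an alternative, one could prove \eqref{Pk} directly by inserting \eqref{ex-expl} and \eqref{inversa} into $P_j(d\nu;x,y)=\QQ_j^\tr(x)H_j^{-1}\QQ_j(y)$ and simplifying with \eqref{P-K1} and \eqref{P-K2}; there the crux is that all cross terms vanish by virtue of \eqref{identity}, and summing over $j$ then recovers \eqref{kernel} by telescoping.
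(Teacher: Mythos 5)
Your proof is correct, but it takes a genuinely different route from the paper. The paper proves \eqref{Pk} first by brute force: it substitutes the explicit formulas \eqref{ex-expl} for $\mathbb{Q}_j$ and \eqref{inversa} for $H_j^{-1}$ into $\mathbb{Q}_j^\tr(x)H_j^{-1}\mathbb{Q}_j(y)$, collapses the cross terms using \eqref{P-K1}, \eqref{P-K2} and the matrix identity \eqref{identity}, and only then obtains \eqref{kernel} by telescoping --- exactly the ``alternative'' you sketch in your last sentence. You instead characterize $K_n(d\nu;\cdot,\cdot)$ abstractly as the unique reproducing kernel of $\Pi_n^d$ for $\la\cdot,\cdot\ra_\nu$ and verify that the right-hand side of \eqref{kernel} reproduces, which reduces everything to the trivial identity $\Lambda(I_N+\mathbf{K}_n\Lambda)=(I_N+\Lambda\mathbf{K}_n)\Lambda$; the computation of $\la\KK_n(\xi,\cdot),p\ra_\nu=(I_N+\mathbf{K}_n\Lambda)\mathbf{p}(\xi)$ is right, and the differencing step that recovers \eqref{Pk} (with the convention $\KK_{-1}=0$, $\mathbf{K}_{-1}=0$) is legitimate because $P_j(d\nu;x,y)=K_j(d\nu;x,y)-K_{j-1}(d\nu;x,y)$ independently of the basis. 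What your route buys is economy and transparency: it needs neither Theorem~\ref{main-thm} nor Proposition~\ref{prop.3.2}, only positive definiteness of $\la\cdot,\cdot\ra_\nu$ and the standard fact (the paper's own citation of \cite[Theorem 3.5.1]{DX}) that $\sum_{j\le n}\mathbb{Q}_j^\tr(x)H_j^{-1}\mathbb{Q}_j(y)$ is \emph{the} reproducing kernel --- a fact you should state explicitly, since the paper defines $K_n(d\nu;x,y)$ by that sum rather than by the reproducing property. What the paper's route buys is that \eqref{Pk}, the finer statement about the individual spaces $\CV_j^d$, comes out directly from the explicit orthogonal polynomials rather than as a difference of two instances of \eqref{kernel}.
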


\begin{proof}
Since $\Lambda^{-1}\,(I_N + \Lambda\,\mathbf{K}_{j-1}) = \Lambda^{-1}\,+\,\mathbf{K}_{j-1}$
is a symmetric matrix, so is $(I_N + \Lambda\,\mathbf{K}_{j-1})^{-1}\,\Lambda$. Using this
fact, it follows from \eqref{ex-expl} and \eqref{inversa} that
\begin{align*}
 \mathbb{Q}^\tr_j(x)\, H^{-1}_j  = &  \mathbb{P}^\tr_j(x) -
   \PP^\tr_j(x) \mathsf{P}_j(\xi)(I - \Lambda \Kb_j)^{-1} \Lambda \mathsf{P}^\tr_j(\xi) \\
  & - \mathbb{K}_{j-1}^\tr(\xi,x)  \,(I_N +\Lambda\,\mathbf{K}_{j-1})^{-1}\,\Lambda\,
     \mathsf{P}_j^\tr(\xi)  \\
   & -  \mathbb{K}_{j-1}^\tr(\xi,x)  \,(I_N + \Lambda\,\mathbf{K}_{j-1})^{-1}\Lambda\,
      \mathsf{P}^\tr_j(\xi)  \mathsf{P}_j(\xi) (I+\Lambda \Kb_j)^{-1} \mathsf{P}_j^\tr(\xi),
\end{align*}
which simplifies to, upon using \eqref{identity} and \eqref{P-K1},
\begin{align*}
 \mathbb{Q}^\tr_j(x)\, H^{-1}_j  = & \mathbb{P}_j^\tr(x)  -
   \PP_j^\tr(x)  \mathsf{P}_j(\xi)(I - \Lambda \Kb_j)^{-1} \Lambda \mathsf{P}_j^\tr(\xi)  \\
    & - \mathbb{K}_{j-1}^\tr(\xi,x) \,(I_N +
    \Lambda\,\mathbf{K}_{j-1})^{-1}\,\Lambda\, \mathsf{P}_j^\tr(\xi) \\
  =  & \mathbb{P}_j^\tr(x) - \mathbb{K}_{j}^\tr(\xi,x) \,(I_N +
\Lambda\,\mathbf{K}_{j})^{-1}\,\Lambda\, \mathsf{P}_j^\tr(\xi).
\end{align*}
Using again \eqref{ex-expl} and \eqref{P-K1}, we then obtain
\begin{align*}
 \mathbb{Q}^\tr_j(x)\,& H^{-1}_j \mathbb{Q}_j(y) =
   \mathbb{P}_j^\tr(x) \,\mathbb{P}_j(y)  \\
&- [\mathbb{K}_{j}^\tr(\xi,x) - \mathbb{K}_{j-1}^\tr(\xi,x)] \,(I_N +
\Lambda\,\mathbf{K}_{j-1})^{-1}\,\Lambda\, \mathbb{K}_{j-1}(\xi,y) \\
&-  \mathbb{K}_{j}^\tr(\xi,x) \,(I_N +
\Lambda\,\mathbf{K}_{j})^{-1}\,\Lambda\, [\mathbb{K}_{j}(\xi,y) - \mathbb{K}_{j-1}(\xi,y)] \\
&+ \mathbb{K}_{j}^\tr(\xi,x) \,(I_N +
\Lambda\,\mathbf{K}_{j})^{-1}\,\Lambda\,\mathsf{P}^\tr_j(\xi)  \mathsf{P}_j(\xi) (I_N + \Lambda\,\mathbf{K}_{j-1})^{-1} \Lambda \mathbb{K}_{j-1}(\xi,y),
\end{align*}
which simplifies to \eqref{Pk} upon using the identity \eqref{identity}.

Finally, summing over \eqref{Pk} for $j = 0, 1, \ldots, n$, we obtain \eqref{kernel}.
\end{proof}

The result in this section can be extended without much difficulty to mass points with derivative
values. To be more precise, let $\partial^\a = \partial_1^{\a_1} \cdots \partial_d^{\a_d}$, where
$\partial_i = \frac{\partial}{\partial x_i}$, and for $\a_i \in \NN_0^d$, $i = 1,2,\ldots, N$, define
\begin{equation} \label{lpd-Soblev}
 D_\a\mathbf{p}(\xi) := \left \{ \partial^{\a_1} p(\xi_1),  \partial^{\a_2}p(\xi_2), \ldots,
  \partial^{\a_N}p(\xi_N) \right \},
\end{equation}
and regard it also as a column vector. Instead of requiring $\xi_i \ne \xi_j$, we only assume
that $\xi_i \ne \xi_j$ when $\a_i = \a_j$. In other word, $\xi_i$ and $\xi_j$ can be the same
as long as $\a_i \ne \a_j$. We then consider the inner product defined by
\begin{equation} \label{soblev}
 \langle p, q \rangle_\nu = \langle p, q \rangle_\mu + D_\a\mathbf{p}(\xi)^\tr \, \Lambda \,
  D_\a \mathbf{q}(\xi).
\end{equation}
When $\a_i =0$ for all $i$, this is the inner product in \eqref{ipd2}. Other interesting cases
include, for example,
$$
   \langle p, q \rangle_\nu = \langle p, q \rangle_\mu +
   \sum_{j =0}^N \l_j p(\xi_j) q(\xi_j) + \sum_{j=0}^N \lambda_j' \nabla p(\xi_j) \cdot \nabla q(\xi_j).
$$

Our results in Theorem~\ref{main-thm}, Proposition~\ref{prop.3.2} and Theorem~\ref{thm.3.3}
still hold in this setting, but we need to replace $\mathsf{P}_n$ in \eqref{sP}, $\Kb_{n-1}$ in \eqref{cK},  and $\KK_{n-1}(\xi,x)$ in \eqref{sK}  by
\begin{align*}
\mathsf{P}_n^*(\xi):= & \left(\partial^{\a_1}\mathbb{P}_n(\xi_1) | \partial^{\a_2}\mathbb{P}_n(\xi_2)
   | \ldots  | \partial^{\a_d}\mathbb{P}_n(\xi_N)   \right) \in \mathcal{M}_{r_n^d \times N}, \\
\mathbf{K}_{n-1}^* := & \big(\partial^{\a_i}_{\{1\}} \partial^{\a_j}_{\{2\}}
   K_{n-1}^*(d\mu;\xi_i,\xi_j) \big)_{i,j=1}^N \in \mathcal{M}_{N\times N}, \\
  {\mathbb{K}_{n-1}^*(\xi,x)} = & \left\{\partial^{\a_1}_{\{1\}}K _{n-1}(d\mu; \xi_1,x),
  \partial^{\a_2}_{\{1\}} K_{n-1}(d\mu;\xi_2,x),
    \ldots, \partial^{\a_d}_{\{1\}} K_{n-1}(d\mu;\xi_N,x)\right \}
\end{align*}
respectively, where $\partial^\a_{\{1\}} K_n(u,v)$ means that the derivative is taken with respect
to $u$ variable.

\begin{thm}
The results in Theorem~\ref{main-thm} and Theorem~\ref{thm.3.3} hold for the inner
product defined in \eqref{lpd-Soblev} when $\mathsf{P_n}$, $\Kb_{n-1}$ and $\KK_{n-1}(\xi,x)$
are replaced by $\mathsf{P}^*_n$, $\Kb_{n-1}^*$ and $\KK_{n-1}^*(\xi,x)$, respectively.
\end{thm}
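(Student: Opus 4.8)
The plan is to exploit the fact that the proofs of Theorem~\ref{main-thm}, Proposition~\ref{prop.3.2}, and Theorem~\ref{thm.3.3} are entirely formal: they never use the specific nature of the point--evaluation functional $p\mapsto\mathbf p(\xi)$, only that the perturbation term in $\la\cdot,\cdot\ra_\nu$ has the shape $(\text{linear functional of }p)^\tr\,\Lambda\,(\text{linear functional of }q)$, together with three structural facts, namely the orthonormality $\la\PP_n,\PP_n^\tr\ra_\mu=I_{r_n^d}$, the relations \eqref{P-K1}--\eqref{P-K2}, and the invertibility of $I_N+\Lambda\Kb_{n-1}$. Accordingly I would first record that the linear map $L\colon\Pi^d\to\RR^N$ with $Lp:=D_\a\mathbf p(\xi)$, whose $i$-th component is $\partial^{\a_i}p(\xi_i)$, now plays the role formerly played by $p\mapsto\mathbf p(\xi)$, and that \eqref{soblev} still defines a positive definite inner product, since $\la p,p\ra_\nu=\la p,p\ra_\mu+(Lp)^\tr\Lambda(Lp)\ge\la p,p\ra_\mu>0$ for $p\ne 0$ because $\Lambda$ is positive definite; hence an OPS with respect to $\la\cdot,\cdot\ra_\nu$ exists.

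Next I would establish the generalized reproducing identities on which everything hinges. Writing $K_{n-1}(d\mu;u,x)=\sum_{|\b|\le n-1}P_\b(u)P_\b(x)$ and differentiating the first argument, the $i$-th entry of $\KK_{n-1}^*(\xi,x)$ equals $\sum_{|\b|\le n-1}\partial^{\a_i}P_\b(\xi_i)\,P_\b(x)$, so that $\la\KK_{n-1}^*(\xi,\cdot),p\ra_\mu=Lp$ for every $p\in\Pi_{n-1}^d$; this is the precise analogue of the reproducing property used in the telescoping step \eqref{QnPj}. Differentiating $\PP_n(\xi_i)^\tr\PP_n(x)=P_n(d\mu;\xi_i,x)$ in $\xi_i$, and $\PP_n(\xi_i)^\tr\PP_n(\xi_j)=P_n(d\mu;\xi_i,\xi_j)$ in both $\xi_i$ and $\xi_j$, yields entrywise the starred versions of \eqref{P-K1} and \eqref{P-K2},
$$
\mathsf P_n^{*\,\tr}(\xi)\PP_n(x)=\KK_n^*(\xi,x)-\KK_{n-1}^*(\xi,x),\qquad
\mathsf P_n^{*\,\tr}(\xi)\mathsf P_n^*(\xi)=\Kb_n^*-\Kb_{n-1}^*.
$$
Here the only point to check is that differentiation commutes with the finite sums defining the kernels, which is immediate since these are polynomials.

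Finally I would verify the invertibility lemma in the new setting: for $\cb\in\RR^N$ one has $\cb^\tr\Kb_{n-1}^*\cb=\sum_{|\b|\le n-1}\big|\sum_j c_j\partial^{\a_j}P_\b(\xi_j)\big|^2\ge 0$, so $\Kb_{n-1}^*$ is positive semidefinite, whence $\Lambda^{-1}+\Kb_{n-1}^*$ is positive definite and $I_N+\Lambda\Kb_{n-1}^*$ is invertible exactly as before. With these ingredients the proofs of Theorem~\ref{main-thm}, Proposition~\ref{prop.3.2} (which is needed as an intermediate step even though it is not named in the statement), and Theorem~\ref{thm.3.3} transcribe word for word, replacing $\mathbf p(\xi)$ by $Lp$, $\mathsf P_n(\xi)$ by $\mathsf P_n^*(\xi)$, $\Kb_{n-1}$ by $\Kb_{n-1}^*$, and $\KK_{n-1}(\xi,x)$ by $\KK_{n-1}^*(\xi,x)$; in particular the algebraic identity \eqref{identity}, which is the computational heart of both Proposition~\ref{prop.3.2} and Theorem~\ref{thm.3.3}, follows from the starred form of \eqref{P-K2} in the same way. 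I expect the only genuine obstacle to be the careful, correct formulation of the one-- and two--sided derivative reproducing identities, keeping precise track of which argument each $\partial^{\a_i}$ acts on; once these are pinned down, the remaining arguments are purely mechanical substitutions into the already established proofs.
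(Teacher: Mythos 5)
Your proposal is correct and matches the paper's intent exactly: the paper's own ``proof'' is the single remark that the argument carries over almost verbatim, and you have simply made explicit the structural ingredients (the starred analogues of \eqref{P-K1}--\eqref{P-K2}, the derivative reproducing property, and the invertibility of $I_N+\Lambda\Kb_{n-1}^*$) that make the transcription work. Your observation that $\Kb_{n-1}^*$ need only be positive \emph{semi}definite (since $\partial^{\a_j}$ can annihilate all of $\Pi_{n-1}^d$ when $|\a_j|\ge n$), with invertibility still following from the positive definiteness of $\Lambda^{-1}+\Kb_{n-1}^*$, is a necessary and correctly handled refinement of the original lemma.
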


The proof follows as before almost verbatim with little additional difficulty.

\section{Orthogonal polynomials on the simplex}
\setcounter{equation}{0}

In this section we apply the general result in the previous section to orthogonal polynomials on
the simplex
$$
T^d : = \{x = ({x_1},\ldots,
   {x_d})\in \mathbb{R}^d: {x_i} \ge 0, 1-|x|_1 \ge0 \}
$$
in $\mathbb{R}^d$, where $|x|_1 = x_1 + \ldots + x_d$.

\subsection{Jacobi polynomials on the simplex}
We consider the Jacobi weight function
$$
W_{\kappa}(x) = x_1^{\kappa_1 -1/2} \cdots  x_d^{\kappa_d -1/2} (1 - |x|_1)^{\kappa_{d+1} -1/2}, \quad \kappa_i \ge 0,
$$
on the simplex, where
$$
w_{\kappa} = \frac{\Gamma(|\kappa|+ \frac{d+1}{2})}{\Gamma(\kappa_1+ \frac{1}{2}) \cdots \Gamma(\kappa_{d+1} + \frac{1}{2})}, \qquad
      |\kappa| : = \kappa_1 + \kappa_2 + \cdots + \kappa_{d+1},
$$
is the normalization constant of $W_{\kappa}$ such that $w_{\kappa}\,\int_{T^d} W_{\kappa}(x)\,dx=1$. Associated with $W_\k$, we consider the inner product on the simplex
\begin{equation}\label{simplex-ipg}
 \langle f, g \rangle = w_{\kappa} \, \int_{T^d} f(x) \,g(x) \, W_{\kappa}(x) \, dx,
\end{equation}
which plays the role of $\la \cdot, \cdot \ra_\mu$ when we deal with the settings of
the previous section. For $d=1$, $W_\k$ is the classical Jacobi weight function, which
has orthogonal polynomials $P_n^{(\k_1,\k_2)}(2t -1)$, where $P_n^{(a,b)}$ is the
classical Jacobi polynomial of degree $n$ that is orthogonal with respect to
$(1-t)^a (1+t)^b$ on $[-1,1]$ and normalized by $P_n^{(a,b)}(1) = \binom{n+a}{n}$.
We shall also denote the orthonormal Jacobi polynomials by $p_n^{(a,b)}(t)$. Evidently,
$p_n^{(a,b)}(t) = c_n P_n^{(a,b)}(t)$, where the constant $c_n$ is given by \cite[(4.3.3)]{Sz}.

To state an orthonormal basis for $\CV_n^d$ on the simplex, we follow \cite[p. 47]{DX}
and introduce the following notation. Associated with $x = (x_1, \ldots, x_d) \in \mathbb{R}^d$,
we define by $\mathbf{x}_j$ the truncation of $x$, namely
$$
\mathbf{x}_0 = 0, \quad \mathbf{x}_j = (x_1, \ldots, x_j), \quad 1 \le j \le d,
$$
and associated with $\alpha = (\alpha_1, \ldots, \alpha_{d}) \in \mathbb{N}^d_0$ and
$\kappa = (\kappa_1, \ldots, \kappa_{d+1}) \in \mathbb{R}^{d+1}$, we introduce, respectively,
$$
 \alpha^j := (\alpha_j, \ldots, \alpha_{d}), \quad 1 \le j \le d, \qquad
 \kappa^j := (\kappa_j, \ldots, \kappa_{d+1}), \quad 1 \le j \le d+1.
$$
Then, an orthonormal basis associated with \eqref{simplex-ipg} is given explicitly by
\begin{equation} \label{simplex-base}
 P_{\alpha}(W_\kappa; x) = h^{-1}_{\alpha}
\prod_{j=1}^d \left(\frac{1- |\mathbf{x}_{j}|_1}{1- |\mathbf{x}_{j-1}|_1} \right)^{ |\alpha^{j+1}|}
 p_{\alpha_j}^{(a_j,b_j)}\left(\frac{2 x_j}{1- |\mathbf{x}_{j-1}|_1} - 1\right),
\end{equation}
where the parameters $a_j$ and $b_j$ are given by
$$
a_j = 2 |\alpha^{j+1}| + |\kappa^{j+1}| + \frac{d-j-1}{2}, \qquad b_j = \kappa_j - \frac{1}{2},$$
and $h_{\alpha}$ is the normalizing constant given by
$$
h^{2}_{\alpha} =  \frac{(|\kappa|+ \frac{d+1}{2})_{2 |\alpha|}}
{\prod_{j=1}^d (2|\alpha^{j+1}|+|\kappa^j|+ \frac{d-j+2}{2})_{2\alpha_j}},$$
in which $(a)_k := a(a+1) \ldots (a+ k-1)$ denotes the shifted factorial.

In this case, we also have a compact formula for the reproducing kernels, given in
terms of the Gegenbauer polynomials $C^{\lambda}_{n}$, which are orthogonal with
respect to the weight function $(1-t^2)^{\lambda-1/2}$ and normalized by
$C_n^\l(1) = \binom{n+2\l-1}{n}$. The formula, first derived in \cite[Theorem 2.3]{X98},
is given by
\begin{align} \label{KnSimplex}
 K_n(W_{\kappa}; x, y)= & \frac{1}{2^{d+1}} \, \int_{[-1,1]^{d+1}} C^{\lambda}_{2n}(\sqrt{x_1\,y_1}\,t_1 + \cdots +  \sqrt{x_{d+1}\, y_{d+1}}\, t_{d+1}) \\
   &\quad \times \prod_{j=1}^{d+1} c_{\kappa_j}\, (1-t_j^2)^{\kappa_j-1}\, dt, \notag
\end{align}
where $x_{d+1} = 1-|x|_1$, $y_{d+1} = 1-|y|_1$,  $\lambda:= |\kappa|+\frac{d+1}{2}$, and $c_{\kappa_j} = \int_{-1}^1 (1-t_j^2)^{\kappa_j-1}\, dt_i$.

Let us denote the standard Euclidean basis of $\RR^d$ by $\{e_1, \ldots, e_d\}$, where
$e_i = (0,\ldots,0,1,0\ldots, 0)$ with the single 1 in the $i$-th position. Furthermore, we
set $e_{d+1} = (0,\ldots, 0) \in \RR^d$. Then $\{e_1,e_2,\ldots, e_{d+1}\}$ is the set of
vertices of $T^d$.

\begin{prop}
Let $\lambda= |\kappa|+\frac{d+1}{2}$. For $1\le i \le d+1$, we have
\begin{equation}\label{kernel-s}
K_n(W_{\kappa}; x, e_i) = \frac{1}{2^{d+1}} \, \frac{(\lambda)_n}{(\kappa_i+1/2)_n} \,
   P^{(\lambda-\kappa_i-1/2,\kappa_i-1/2)}_{n}(2\, x_i-1).
\end{equation}
In particular, we have
\begin{align}
K_n(W_{\kappa}; e_i, e_i) &= \frac{1}{2^{d+1}} \frac{(\lambda)_n}{n!}
  \frac{(\lambda-\kappa_i+1/2)_n}{(\kappa_i+1/2)_n}, \quad 1\le i \le d+1. \label{kernel-i-i}  \\
K_n(W_{\kappa}; e_i, e_j) &= \frac{(-1)^n}{2^{d+1}} \frac{(\lambda)_n}{n!}, \quad 1\le i,j \le d+1, \label{kernel-k-i}
\end{align}
\end{prop}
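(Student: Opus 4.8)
The plan is to start from the closed-form expression \eqref{KnSimplex} for $K_n(W_\kappa;x,y)$ and simply set $y=e_i$. The decisive observation is that at a vertex the $(d+1)$-fold integral collapses to a single integral: writing $y_{d+1}=1-|y|_1$, exactly one of the coordinates $y_1,\dots,y_{d+1}$ of $e_i$ equals $1$ (the $i$-th one) and all the rest vanish, so every product $\sqrt{x_k y_k}$ with $k\neq i$ is zero and the argument of the Gegenbauer polynomial reduces to $\sqrt{x_i}\,t_i$ (with the convention $x_{d+1}=1-|x|_1$). Since the integrand then depends only on $t_i$, the integrations in the variables $t_k$, $k\neq i$, factor out; each equals $\int_{-1}^1 c_{\kappa_k}(1-t_k^2)^{\kappa_k-1}\,dt_k=1$, and one is left with
\[
  K_n(W_\kappa;x,e_i)=\frac{1}{2^{d+1}}\,c_{\kappa_i}\int_{-1}^1 C^{\lambda}_{2n}\big(\sqrt{x_i}\,t\big)\,(1-t^2)^{\kappa_i-1}\,dt .
\]

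First I would record the quadratic transformation $C_{2n}^\lambda(s)=\frac{(\lambda)_n}{(1/2)_n}P_n^{(\lambda-1/2,-1/2)}(2s^2-1)$ and apply it with $s=\sqrt{x_i}\,t$, so the integrand becomes $P_n^{(\lambda-1/2,-1/2)}(2x_i t^2-1)(1-t^2)^{\kappa_i-1}$. Using the symmetry of the integrand and the substitution $u=t^2$, the integral turns into
\[
  \int_0^1 P_n^{(\lambda-1/2,-1/2)}(2x_i u-1)\,u^{-1/2}(1-u)^{\kappa_i-1}\,du .
\]
The main technical step, which I expect to be the principal obstacle, is to evaluate this by the fractional-integration identity for Jacobi polynomials, because it shifts \emph{both} indices: integrating $P_n^{(\alpha,\beta)}(2ru-1)$ against $u^{\beta}(1-u)^{\gamma-1}$ on $[0,1]$ returns a multiple of $P_n^{(\alpha-\gamma,\beta+\gamma)}(2r-1)$, with the first index lowered by $\gamma$ and the second raised by $\gamma$. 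With $\alpha=\lambda-1/2$, $\beta=-1/2$, $\gamma=\kappa_i$ this produces exactly $P_n^{(\lambda-\kappa_i-1/2,\kappa_i-1/2)}(2x_i-1)$, the functional form in \eqref{kernel-s}. I would verify this identity and determine its Beta-function constant $\tfrac{\Gamma(n+1/2)\Gamma(\kappa_i)}{\Gamma(n+\kappa_i+1/2)}$ directly (checking $n=0,1$ and inducting), rather than quoting it.

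With the functional form secured, the remainder is bookkeeping of constants: the factor $\frac{(\lambda)_n}{(1/2)_n}$ from the quadratic transformation, the Beta-factor from the fractional integration, and the probability normalization $c_{\kappa_i}=1/\int_{-1}^1(1-t^2)^{\kappa_i-1}dt=\Gamma(\kappa_i+1/2)/(\sqrt\pi\,\Gamma(\kappa_i))$ must collapse to $\frac{(\lambda)_n}{(\kappa_i+1/2)_n}$; here I would use $(1/2)_n=\Gamma(n+1/2)/\Gamma(1/2)$ and $\Gamma(n+\kappa_i+1/2)=\Gamma(\kappa_i+1/2)(\kappa_i+1/2)_n$ to cancel everything except the desired ratio. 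Combined with the leftover $\frac{1}{2^{d+1}}$ this establishes \eqref{kernel-s}.

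Finally, \eqref{kernel-i-i} and \eqref{kernel-k-i} follow by evaluating \eqref{kernel-s} at the vertices. For $x=e_i$ one has $x_i=1$, so the argument is $2x_i-1=1$ and $P_n^{(\lambda-\kappa_i-1/2,\kappa_i-1/2)}(1)=\binom{n+\lambda-\kappa_i-1/2}{n}=\frac{(\lambda-\kappa_i+1/2)_n}{n!}$, giving \eqref{kernel-i-i}. For $x=e_j$ with $j\neq i$ (including the cases where $i$ or $j$ equals $d+1$) the $i$-th coordinate vanishes, so $2x_i-1=-1$ and $P_n^{(\lambda-\kappa_i-1/2,\kappa_i-1/2)}(-1)=(-1)^n\binom{n+\kappa_i-1/2}{n}=(-1)^n\frac{(\kappa_i+1/2)_n}{n!}$, which cancels the $(\kappa_i+1/2)_n$ in the denominator of \eqref{kernel-s} and yields \eqref{kernel-k-i}.
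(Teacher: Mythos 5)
Your proof is correct and follows the same route as the paper: specialize the integral representation \eqref{KnSimplex} at $y=e_i$, where the $(d+1)$-fold integral collapses to a single one-dimensional integral of $C^{\lambda}_{2n}(\sqrt{x_i}\,t)$ against $c_{\kappa_i}(1-t^2)^{\kappa_i-1}$, identify the result as $\frac{(\lambda)_n}{(\kappa_i+1/2)_n}P_n^{(\lambda-\kappa_i-1/2,\kappa_i-1/2)}(2x_i-1)$, and then evaluate at $\pm1$ via $P_n^{(a,b)}(1)=\binom{n+a}{n}$ and $P_n^{(a,b)}(-1)=(-1)^n\binom{n+b}{n}$. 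The only difference is in the middle step: the paper recognizes the one-dimensional integral as the intertwining operator $V^{(\kappa_i)}$ applied to $C^{\lambda}_{2n}$ and cites \cite[Proposition 1.5.6]{DX}, whereas you rederive that identity from the quadratic transformation $C^{\lambda}_{2n}(s)=\frac{(\lambda)_n}{(1/2)_n}P_n^{(\lambda-1/2,-1/2)}(2s^2-1)$ combined with the Bateman fractional-integral formula, and your bookkeeping of the constants ($c_{\kappa_i}$, $(1/2)_n$, and the Beta factor collapsing to $1/(\kappa_i+1/2)_n$) is accurate.
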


\begin{proof}
Since $C^{\lambda}_{2n}$ is an even function, for  $1\le i \le d +1 $ we deduce from
\eqref{KnSimplex} that
\begin{align*}
K_n(W_{\kappa}; x, e_i) =& \frac{c_{\kappa_i}}{2^{d+1}} \, \int_{-1}^1
   C^{\lambda}_{2n}(\sqrt{x_i}\,t_i) \, (1-t_i^2)^{\kappa_i-1}\, dt_i\\
=& \frac{c_{\kappa_i}}{2^{d+1}} \, \int_{-1}^1 C^{\lambda}_{2n}(\sqrt{x_i}\,t_i) \, (1-t_i)^{\kappa_i-1}\, (1+t_i)^{\kappa_i}\,dt_i\\
=& \frac{1}{2^{d+1}} \, V^{(\kappa_i)} \, C^{\lambda}_{2n}(\sqrt{x_i}) \\
=& \frac{1}{2^{d+1}} \, \frac{(\lambda)_n}{(\kappa_i+1/2)_n} \, P^{(\lambda-\kappa_i-1/2,\kappa_i-1/2)}_{n}(2\, x_i-1),
\end{align*}
where $V^{(\kappa_i)}$ is the operator defined in \cite[Definition 1.5.1, p. 24]{DX} and the last equality comes from \cite[Proposition 1.5.6, p. 27]{DX}.  In particular, setting $x = e_j$ in
\eqref{kernel-s} shows that
$$
K_n(W_{\kappa}; e_i, e_i) = \frac{1}{2^{d+1}} \, \frac{(\lambda)_n}{(\kappa_i+1/2)_n} \,
 P^{(\lambda-\kappa_i-1/2,\kappa_i-1/2)}_{n}(1),
$$
and, for $i\neq j$,
$$
K_n(W_{\kappa}; e_j, e_i) = \frac{1}{2^{d+1}} \, \frac{(\lambda)_n}{(\kappa_i+1/2)_n} \,
  P^{(\lambda-\kappa_i-1/2,\kappa_i-1/2)}_{n}(-1),
$$
from which \eqref{kernel-i-i} and \eqref{kernel-k-i} follow from \cite[(4.1.1) and (4.1.4)]{Sz}).
\end{proof}

\subsection{Orthogonal polynomials on the simplex with mass points}
We consider orthogonal polynomials on the simplex for the Jacobi measure with additional
mass at each of the vertices of the simplex. In order to preserve symmetry, we shall limit
ourself to the situation that every vertex has the same weight $M>0$. In other words, we
consider the inner product
\begin{equation} \label{simplex-ip}
 \langle f, g \rangle_{\nu} =  w_{\kappa} \, \int_{T^d} f(x) \, g(x) \, W_{\kappa}(x) \, dx +
M \, \sum_{i=1}^{d+1} f(e_i) \, g(e_i), \qquad M > 0.
\end{equation}
In the language of the inner product \eqref{ipd2}, we assume that $\Lambda$ is a diagonal
matrix $\Lambda = M\, I_{d+1}$ and the inner product take the form of \eqref{sum-mass}.

We will further limit ourself to the case that $\kappa_1 = \kappa_2 = \cdots =
\kappa_{d+1} = \varsigma \ge 0$. Under this assumption,
$$
     \lambda = (d+1) (\varsigma + 1/2).
$$
We further denote
\begin{align*}
A_n &:=  K_n(W_{\kappa};e_i,e_i) = \frac{1}{2^{d+1}} \frac{(\lambda)_n}{n!} \frac{(\lambda-\varsigma + 1/2)_n}{(\varsigma + 1/2)_n},\\
B_n &:= K_n(W_{\kappa};e_j,e_i) = \frac{(-1)^n}{2^{d+1}} \frac{(\lambda)_n}{n!}, \qquad j\neq i.
\end{align*}
As a result, we see that the matrix $\Kb_n$ defined in \eqref{cK} is given by
\begin{align*}
{\mathbf{K}_{n}} = \, & (K_{n}(W_{\kappa};e_i,e_j))_{i,j=1}^{d+1} = \begin{pmatrix}
{A_n} & {B_n} & \cdots & {B_n}\\
{B_n} & {A_n} & \cdots & {B_n}\\
\vdots & \vdots & \ddots & \vdots\\
{B_n} & {B_n} & \cdots & {A_n}
\end{pmatrix} \\
= \, & ({A_n}-{B_n})I_{d+1} +
{B_n} \begin{pmatrix} 1 & \cdots & 1\\\vdots &  & \vdots\\
1 & \cdots & 1\end{pmatrix}\\
= \, & ({A_n}-{B_n})I_{d+1} +
{B_n} \begin{pmatrix} 1 \\  \vdots \\1\end{pmatrix} (1,\ldots,1).
\end{align*}
This shows that $\mathbf{K}_n$ is a rank one perturbation of the identity matrix and,
consequently, the inverse of the matrix $I_{d+1} + \Lambda \, \Kb_n$ can be easily
verified to be
\begin{align*}
   (I_{d+1} + \Lambda \, \mathbf{K}_n)^{-1} \Lambda = & \,
\frac{M}{[1+M({A_n} - {B_n})][1+M\,{A_n} + d\,M\,{B_n}]} \\
&  \times\left[(1+M\,{A_n} +  d\,M\,{B_n})I_{d+1} - M\,{B_n}
   \begin{pmatrix} 1 & \cdots & 1\\\vdots &  & \vdots\\
1 & \cdots & 1\end{pmatrix}\right].
\end{align*}
As a result, we can now use Theorem \ref{main-thm} to derive an explicit orthogonal basis
for the inner product \eqref{simplex-ip}, which is given by
\begin{align*}
\mathbb{Q}_n(x)  = \, & \mathbb{P}_n(x) +
\frac{M}{1+M(A_{n-1} - B_{n-1})} \sum_{i=1}^{d+1}
\mathbb{P}_n(e_i)\, K_{n-1}(W_{\kappa};x,e_i)\\
 & -  \frac{M^2 \,B_{n-1}}{[1+M(A_{n-1} - B_{n-1})][1 +M\,A_{n-1} + d\,M\,{B_{n-1}}]}\times\\
 & \quad  \times\sum_{i=1}^{d+1} \mathbb{P}_n(e_i) \,
         \sum_{i=1}^{d+1} K_{n-1}(W_{\kappa};x,e_i),
\end{align*}
where $\{\mathbb{P}_n\}_{n\ge0}$ denotes the orthonormal polynomial system on
the simplex $T^d$ given by \eqref{simplex-base}. Furthermore, by Theorem \ref{thm.3.3},
the reproducing kernel $K_n(d\nu;x,y)$ for $\Pi_n^d$ under the inner product
\eqref{simplex-ip} is given by
\begin{align}
K_n(d\nu; x,y)  = & K_n(W_{\kappa};x,y) +
   \frac{M}{1+M({A_n} - {B_n})} \sum_{i=1}^{d+1}
   K_n(W_{\kappa};x,e_i)\,K_n(W_{\kappa};y,e_i)\nonumber \label{kernel-simplex}\\
& - \frac{M^2 \,{B_n}}{[1+M({A_n} - {B_n})][1 +M\,{A_n} + d\,M\,{B_n}]}\times\\
& \quad \times\sum_{i=1}^{d+1} K_n(W_{\kappa};x,e_i)
\sum_{i=1}^{d+1} K_n(W_{\kappa};y,e_i). \nonumber
\end{align}

The explicit formula of the kernel allows us to derive a sharp estimate for the
kernel $K_n(d\nu;x,y)$ from those for $K_n(W_\k;x,y)$ and for the Jacobi polynomials.
In the case of one variable ($d=1$), such an estimate has been carried out in \cite{GPRV}.
We shall give one result on the strong asymptotic of the Christoffel function with respect
to $d\nu$ on the simplex $T^d$. For this purpose, we will need the following estimate of
the Jacobi polynomials (\cite[(7.32.5) and (4.1.3)]{Sz}):

\begin{lemma} \label{lem:3.2}
For an arbitrary real number $\alpha$ and $t \in [0,1]$,
\begin{equation} \label{Est-Jacobi}
|P_n^{(\alpha,\beta)} (t)| \le c n^{-1/2} (1-t+n^{-2})^{-(\alpha+1/2)/2}.
\end{equation}
The estimate on $[-1,0]$ follows from the fact that $P_n^{(\alpha,\beta)}
(t) = (-1)^nP_n^{(\beta, \alpha)} (-t)$.
\end{lemma}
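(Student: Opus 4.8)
The plan is to recognize the claimed bound as a single-formula repackaging of the classical two-regime estimate for Jacobi polynomials, so that the real content is reconciling the two regions through the regularizing term $n^{-2}$ rather than any new analysis. First I would substitute $t = \cos\theta$ with $\theta \in [0,\pi/2]$, which covers $t \in [0,1]$, and invoke the standard bound from \cite[(7.32.5)]{Sz}: there is a fixed threshold $c>0$ and a constant $C$ (depending only on $\alpha,\beta$) with
\begin{equation*}
|P_n^{(\alpha,\beta)}(\cos\theta)| \le C
\begin{cases}
n^{\alpha}, & 0 \le \theta \le c/n, \\
n^{-1/2}\,\theta^{-\alpha-1/2}, & c/n \le \theta \le \pi/2.
\end{cases}
\end{equation*}
The first line is consistent with the endpoint normalization $P_n^{(\alpha,\beta)}(1) = \binom{n+\alpha}{n} \asymp n^{\alpha}$ recorded in \cite[(4.1.3)]{Sz}, which fixes the behaviour as $\theta \to 0$.

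Next I would verify that the right-hand side of \eqref{Est-Jacobi} reproduces both regimes, using $1 - t = 1 - \cos\theta \asymp \theta^2$ on $[0,\pi/2]$. In the bulk region $\theta \ge c/n$ one has $n^{-2} \lesssim \theta^2$, so $1 - t + n^{-2} \asymp \theta^2$ and
$$ n^{-1/2}(1-t+n^{-2})^{-(\alpha+1/2)/2} \asymp n^{-1/2}\,\theta^{-(\alpha+1/2)}, $$
matching the second line. In the endpoint region $\theta \le c/n$ one has $1-t \lesssim n^{-2}$, hence $1 - t + n^{-2} \asymp n^{-2}$ and
$$ n^{-1/2}(1-t+n^{-2})^{-(\alpha+1/2)/2} \asymp n^{-1/2}\,n^{\alpha+1/2} = n^{\alpha}, $$
matching the first line. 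Thus a single implied constant absorbs both cases and \eqref{Est-Jacobi} holds on $[0,1]$. Finally, the estimate on $[-1,0]$ follows at once from the symmetry $P_n^{(\alpha,\beta)}(t) = (-1)^n P_n^{(\beta,\alpha)}(-t)$ recorded in the lemma: applying the $[0,1]$-bound to $P_n^{(\beta,\alpha)}$ at the point $-t \in [0,1]$ transfers the estimate, now with $1-t$ replaced by $1+t$ and the exponent governed by $\beta$ instead of $\alpha$, as one expects near the opposite endpoint.

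The main obstacle here is not analytic depth but bookkeeping at the transition scale $\theta \asymp 1/n$: I must check that the two asymptotic regimes glue without a gap and that the implied constants are genuinely uniform in $n$, which is exactly what the interpolation computation above confirms. A secondary subtlety is the phrase ``arbitrary real $\alpha$'', since \cite[(7.32.5)]{Sz} is customarily stated for $\alpha,\beta>-1$; for parameters outside this range one would shift $\alpha$ into the classical window using the differentiation relation
$$ \tfrac{d}{dt}P_n^{(\alpha,\beta)}(t) = \tfrac{1}{2}(n+\alpha+\beta+1)\,P_{n-1}^{(\alpha+1,\beta+1)}(t) $$
together with the standard contiguous relations. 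In the present application this is moot, however, because the relevant parameters $\alpha = \lambda - \kappa_i - 1/2 \ge 0$ and $\beta = \kappa_i - 1/2 \ge -1/2$ already lie in the range where the classical estimate applies directly.
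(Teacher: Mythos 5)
Your argument is correct and is exactly the route the paper intends: the lemma is stated there with only a citation to Szeg\H{o}'s (7.32.5) and (4.1.3), and your two-regime gluing at the scale $\theta\asymp 1/n$ is the standard (and correct) way to package those classical bounds into the single formula \eqref{Est-Jacobi}. Your caveat about ``arbitrary real $\alpha$'' is in fact unnecessary, since Szeg\H{o}'s (7.32.5) is already stated for arbitrary real $\alpha$, but including it does no harm.
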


Note that \eqref{Est-Jacobi} shows that $|P_n^{(\a,\b)}| \le c n^{-1/2}$ uniformly inside
a compact subset of $(-1,1)$. We derive the asymptotic for the difference
$K_n(d\nu;x,x) - K_n(W_{\kappa};x,x)$.

\begin{thm} \label{thm:K-K}
For $x$ in $T^d$,
\begin{align} \label{Kn-Kn}
 K_n(d\nu; x,x)- & {K}_n(W_{\kappa};x,x) =
  \frac{1}{2^{d+1}} \frac{\Gamma(\lambda-\varsigma + 1/2)
    \Gamma(\varsigma + 1/2)}{\Gamma(\lambda)} \\
    & \times  \sum_{i=1}^{d+1}
 \left[P^{(\lambda-\varsigma-1/2,\varsigma-1/2)}_{n}(2\, x_i-1)\right]^2 \left (1+ \CO(n^{-1})\right). \notag
\end{align}
In particular, for $x$ in the interior of $T^d$,
$$
    \lim_{n\to \infty} \left[K_n(d\nu; x,x)-   {K}_n(W_{\kappa};x,x) \right] = 0,
$$
and the convergence is uniform in any compact set in the interior of $T^d$.
\end{thm}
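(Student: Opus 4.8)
The plan is to start from the explicit formula \eqref{kernel-simplex} for $K_n(d\nu;x,y)$, set $y=x$, and substitute the closed form \eqref{kernel-s} of $K_n(W_\kappa;x,e_i)$. Writing $\alpha=\lambda-\varsigma-1/2$, $\beta=\varsigma-1/2$, and abbreviating $p_i:=P_n^{(\alpha,\beta)}(2x_i-1)$ and $g_n:=\frac{1}{2^{d+1}}\frac{(\lambda)_n}{(\varsigma+1/2)_n}$, so that $K_n(W_\kappa;x,e_i)=g_n\,p_i$, the difference becomes
\begin{align*}
K_n(d\nu;x,x)-K_n(W_\kappa;x,x) &= \frac{M\,g_n^2}{1+M(A_n-B_n)}\sum_{i=1}^{d+1}p_i^2 \\
&\quad -\frac{M^2 B_n\,g_n^2}{[1+M(A_n-B_n)][1+M A_n+d M B_n]}\Big(\sum_{i=1}^{d+1}p_i\Big)^2.
\end{align*}
I will call the two summands the \emph{diagonal term} and the \emph{cross term}. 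The strategy is to show that the diagonal term equals the right-hand side of \eqref{Kn-Kn}, and that the cross term is a relative $\CO(n^{-1})$ perturbation absorbed into the factor $1+\CO(n^{-1})$.

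Next I would record the growth of $A_n$ and $B_n$. Using $\Gamma(n+a)/\Gamma(n+b)\sim n^{a-b}$ one gets $A_n\sim c_A\,n^{2\lambda-2\varsigma-1}$ and $|B_n|\sim c_B\,n^{\lambda-1}$, so that
\begin{equation*}
\frac{B_n}{A_n}=\CO\big(n^{-(\lambda-2\varsigma)}\big),\qquad \frac{1}{A_n}=\CO\big(n^{-(2\lambda-2\varsigma-1)}\big).
\end{equation*}
Here the hypothesis $\lambda=(d+1)(\varsigma+1/2)$ enters: it gives $\lambda-2\varsigma=(d-1)\varsigma+(d+1)/2\ge1$ and $2\lambda-2\varsigma-1=d(2\varsigma+1)\ge1$ for $d\ge1$, so both ratios are $\CO(n^{-1})$. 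Since $A_n-B_n>0$ and $A_n+dB_n>0$ (they are the eigenvalues of the positive definite matrix $\Kb_n$), all denominators are positive and I can expand
\begin{equation*}
\frac{M}{1+M(A_n-B_n)}=\frac{1}{A_n}\,\frac{M A_n}{1+M A_n-M B_n}=\frac{1}{A_n}\big(1+\CO(n^{-1})\big).
\end{equation*}

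For the constant I would combine the prefactor $1/A_n$ with $g_n^2$; a direct simplification gives
\begin{align*}
\frac{g_n^2}{A_n}&=\frac{1}{2^{d+1}}\,\frac{n!\,(\lambda)_n}{(\lambda-\varsigma+1/2)_n\,(\varsigma+1/2)_n}\\
&=\frac{1}{2^{d+1}}\,\frac{\Gamma(\lambda-\varsigma+1/2)\,\Gamma(\varsigma+1/2)}{\Gamma(\lambda)}\,\frac{\Gamma(n+1)\,\Gamma(n+\lambda)}{\Gamma(n+\lambda-\varsigma+1/2)\,\Gamma(n+\varsigma+1/2)}.
\end{align*}
The point is that the last ratio of Gamma functions has \emph{balanced} parameters, $1+\lambda=(\lambda-\varsigma+1/2)+(\varsigma+1/2)$, hence equals $1+\CO(n^{-1})$ by Stirling. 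This produces exactly the constant $\frac{1}{2^{d+1}}\frac{\Gamma(\lambda-\varsigma+1/2)\Gamma(\varsigma+1/2)}{\Gamma(\lambda)}$ of \eqref{Kn-Kn}, so that the diagonal term equals this constant times $\sum_i p_i^2$ times $1+\CO(n^{-1})$.

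It remains to dispose of the cross term and read off the limit. Dividing the cross-term coefficient by the diagonal-term coefficient leaves $\frac{M|B_n|}{1+M A_n+d M B_n}=\CO(B_n/A_n)=\CO(n^{-1})$, while Cauchy--Schwarz gives $\big(\sum_i p_i\big)^2\le(d+1)\sum_i p_i^2$; hence the cross term is $\CO(n^{-1})$ times the diagonal term and is swallowed by the $1+\CO(n^{-1})$ factor, establishing \eqref{Kn-Kn}. For the final assertion, on a compact subset $K$ of the interior of $T^d$ every coordinate satisfies $x_i\in[\delta,1-\delta]$, so $2x_i-1$ stays in a compact subinterval of $(-1,1)$; then Lemma~\ref{lem:3.2} yields $|p_i|=\CO(n^{-1/2})$ uniformly on $K$, whence $\sum_i p_i^2=\CO(n^{-1})$ and the whole difference tends to $0$ uniformly on $K$. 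The main obstacle I anticipate is the exponent bookkeeping of the second paragraph: one must verify that $\lambda-2\varsigma\ge1$ so the cross term is genuinely $\CO(n^{-1})$ (not merely $o(1)$), and confirm the balanced-parameter cancellation that pins down the leading constant; the remaining estimates are routine.
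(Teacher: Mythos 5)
Your proof is correct and follows essentially the same route as the paper: the same splitting of the kernel formula at $y=x$ into a diagonal and a cross term, Stirling asymptotics for the coefficient ratios, Cauchy--Schwarz to absorb the cross term into the $1+\CO(n^{-1})$ factor, and Lemma~\ref{lem:3.2} for the interior limit. Your exponent bookkeeping is in fact slightly sharper than the paper's: the decay rate of $MB_n/(1+MA_n+dMB_n)$ is indeed $n^{-(\lambda-2\varsigma)}$ as you compute (the paper writes $n^{-(\lambda-2\varsigma-1)}$), and your observation that $\lambda-2\varsigma=(d-1)\varsigma+(d+1)/2\ge 1$ for all $d\ge 1$ both guarantees the cross term is genuinely a relative $\CO(n^{-1})$ and covers the case $d=1$, which the paper's stated inequality (valid only for $d\ge 2$) leaves out.
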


\begin{proof}
From \eqref{kernel-simplex} we deduce
\begin{align*}
 K_n(d\nu; x,x)- & {K}_n(W_{\kappa};x,x) =
\frac{M}{1+M({A_n} - {B_n})} \sum_{i=1}^{d+1} {K}_n(W_{\kappa};x,e_i)^2 \\
&- \frac{M^2 \,{B_n}}{[1+M({A_n} - {B_n})][1  +M\,{A_n} +
d\,M\,{B_n}]}\times\left(\sum_{i=1}^{d+1}K_n(W_{\kappa};x,e_i)\right)^2\\
=\, & \frac{M C_n^2}{1+M({A_n} - {B_n})} \sum_{i=1}^{d+1}
 \left[P^{(\lambda-\varsigma-1/2,\varsigma-1/2)}_{n}(2\, x_i-1)\right]^2 \\
 & - \frac{M^2 C_n^2 B_n}{[1+M({A_n} - {B_n})][1 + M\,{A_n} + d\,M\,{B_n}]}\times\\
& \quad \times\left(\sum_{i=1}^{d+1}
    P^{(\lambda-\varsigma-1/2,\varsigma-1/2)}_{n}(2\, x_i-1)\right)^2,
\end{align*}
where
$$
C_n = \frac{1}{2^{d+1}} \, \frac{(\lambda)_n}{(\varsigma+1/2)_n}.
$$
By the Stirling formula for the Gamma function (see \cite[(6.1.39), p. 257]{AS}), we have
$$
      \frac{ \Gamma(n+a) }{\Gamma(n+1)} = n^{a-1} (1+ \CO(n^{-1}))
$$
as $n\to\infty$. Consequently, it is easy to see that the following limit relations hold:
\begin{align*}
  \frac{M C_n^2}{1+M({A_n} - {B_n})}  = \, &
  \frac{1}{2^{d+1}} \frac{\Gamma(\lambda-\varsigma + 1/2)
  \Gamma(\varsigma + 1/2)}{\Gamma(\lambda)} \left (1+ \CO(n^{-1})\right), \\
  \frac{M B_n}{1+ M\,{A_n} + d\,M\,{B_n}} = \, &  (-1)^n \frac{\Gamma(\lambda-\varsigma + 1/2)}
     {\Gamma(\varsigma + 1/2)} n^{-(\l - 2 \varsigma -1)}  \left (1+ \CO(n^{-1})\right).
\end{align*}
Since $\l - 2 \varsigma -1 = (d-1)(\varsigma +1/2) > 0$ for $d \ge 2$ and
$$
\left(\sum_{i=1}^{d+1}
    P^{(\lambda-\varsigma-1/2,\varsigma-1/2)}_{n}(2\, x_i-1)\right)^2 \le (d+1)
       \sum_{i=1}^{d+1} \left[P^{(\lambda-\varsigma-1/2,\varsigma-1/2)}_{n}(2\, x_i-1)\right]^2
$$
by the Cauchy--Schwarz inequality, it follows readily that
\begin{equation*}
   K_n(d\nu; x,x) - K_n(W_{\kappa};x,x) = c_{\l,\varsigma} \sum_{i=1}^{d+1}
 \left[P^{(\lambda-\varsigma-1/2,\varsigma-1/2)}_{n}(2\, x_i-1)\right]^2 \left (1+ \CO(n^{-1})\right),
\end{equation*}
where $c_{\l,\varsigma}$ is the constant
$$
   c_{\l,\varsigma}=  \frac{1}{2^{d+1}} \frac{\Gamma(\lambda-\varsigma + 1/2)
    \Gamma(\varsigma + 1/2)}{\Gamma(\lambda)}.
$$
This is \eqref{Kn-Kn}. If $x$ is in the interior of $T^d$, then
$|P_n^{(\lambda-\varsigma-1/2,\varsigma-1/2)}(2\, x_i-1)| \le c n^{-1/2}$, so that
$K_n(d\nu; x,x) - K_n(W_{\kappa};x,x)$ goes to zero as $n \to \infty$.
\end{proof}

The asymptotic of the Christoffel function for $W_\k$ was studied in \cite{X99}, where
most of the results were for convergence in the interior of $T^d$. Such results carry over
to $K_n(d\nu; x,y)$ by Theorem \ref{thm:K-K}. In one particular case, $\kappa =0$, the
convergence holds for all $T^d$ as given in \cite[Theorem 2.3]{X99}:
\begin{equation}\label{limitK}
   \lim_{n \to \infty} \frac{1}{\binom{n+d}{n}} K_n(W_0;x,x) = 2^{d- k}, \qquad x \in T_k^d, \quad
    0 \le k \le d,
\end{equation}
where $T_k^d$ denotes the {\it $k$--dimensional face} of $T^d$, which contains elements
of $T^d$ for which exactly $d-k$ inequalities in $T^d=\{x: x_1\ge 0, \ldots, x_{d+1} \ge 0\}$
becomes equalities. In particular, $T_d^d$ (when none of the inequalities become equality)
is the interior of $T^d$, and $0$--dimensional face $T_0^d$ is the set of the vertices.
Setting $\kappa =0$, so that $\varsigma =0$ and $\l = (d+1)/2$, we see that
$$
  P_n^{(\lambda-\varsigma-1/2,\varsigma-1/2)}(2\, x_i-1) = P_n^{(d/2, -1/2)}(2\, x_i-1)
     = (-1)^n P_n^{(-1/2, d/2)}(1-2\, x_i),
$$
which is bounded by $c n^{-1/2}$ whenever $1- x_i \ge \varepsilon  >0$. It follows then that
$$
  \lim_{n\to \infty} \frac{1}{\binom{n+d}{n}}
        \sum_{i=1}^{d+1} \left[P_n^{(d/2, -1/2)}(2\, x_i-1) \right]^2
        = \begin{cases} 0 & x \in T_k^d, \,\, k >0 \\ 1 & x \in T_0^d, \end{cases}
$$
upon using the fact that $P_n^{(a,b)}(1) = \binom{n+a}{n}$.
By \eqref{Kn-Kn}, we then end up with the following corollary.

\begin{cor}
For $\k = 0$,
$$
   \lim_{n \to \infty} \frac{1}{\binom{n+d}{n}} K_n(d\nu;x,x) = \begin{cases}
      2^{d- k}, & x \in T_k^d,  \,\,    k > 0, \\
      2^d +  E_d, & x \in T_0^d, \end{cases}
$$
where $E_d = c_{(d+1)/2,0} =  \Gamma(d/2+1) \sqrt{\pi} /(\Gamma(d+1/2) 2^{d+1})$.
\end{cor}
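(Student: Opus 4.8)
The plan is to read the corollary off the three facts already assembled: the explicit kernel formula \eqref{kernel-simplex}, the difference asymptotics \eqref{Kn-Kn} of Theorem~\ref{thm:K-K} (specialized to the symmetric case $\varsigma=0$, $\lambda=(d+1)/2$), and the limit \eqref{limitK} of the unperturbed Christoffel function taken from \cite{X99}. I would begin from the splitting
\[
\frac{K_n(d\nu;x,x)}{\binom{n+d}{n}}
= \frac{K_n(W_0;x,x)}{\binom{n+d}{n}}
+ \frac{K_n(d\nu;x,x)-K_n(W_0;x,x)}{\binom{n+d}{n}},
\]
and evaluate the first summand directly by \eqref{limitK}, whose limit is $2^{d-k}$ whenever $x\in T_k^d$, $0\le k\le d$.

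For the second summand, \eqref{Kn-Kn} with $\varsigma=0$ and $\lambda=(d+1)/2$ writes the difference as $c_{(d+1)/2,0}\bigl(1+\CO(n^{-1})\bigr)$ times $\sum_{i=1}^{d+1}[P_n^{(d/2,-1/2)}(2x_i-1)]^2$, so after dividing by $\binom{n+d}{n}$ the entire behaviour is governed by the normalized-polynomial limit recorded just above the statement, namely $0$ on $T_k^d$ with $k>0$ and $1$ at a vertex. Feeding that limit in, the correction term tends to $0$ off the vertices and to $c_{(d+1)/2,0}$ at each vertex. Assembling the two summands case by case then gives $2^{d-k}$ on $T_k^d$ for $k>0$ and $2^d+c_{(d+1)/2,0}$ on $T_0^d$, which is exactly the claim once one sets $E_d=c_{(d+1)/2,0}$.

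This assembly is routine; the genuine content lives in the normalized-polynomial limit that it consumes, and that is where I would concentrate. The mechanism is that a coordinate can satisfy $x_i=1$ only at the vertex $e_i$ (since $x_i=1$ together with $x_j\ge0$ and $|x|_1\le1$ forces every other coordinate to vanish), so away from the vertices each argument $2x_i-1$ stays bounded away from $1$, Lemma~\ref{lem:3.2} makes the corresponding polynomial $\CO(n^{-1/2})$, and such terms disappear after division by $\binom{n+d}{n}\sim n^d/d!$; at a vertex the only surviving term is $[P_n^{(d/2,-1/2)}(1)]^2=\binom{n+d/2}{n}^2$, coming from the endpoint normalization $P_n^{(a,b)}(1)=\binom{n+a}{n}$. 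The main obstacle is precisely the constant bookkeeping here---the Stirling analysis of the vertex term (exactly as in the proof of Theorem~\ref{thm:K-K}) together with the Gamma simplification ($\Gamma(1/2)=\sqrt{\pi}$ and the duplication formula) needed to present $c_{(d+1)/2,0}$ in the closed form $E_d$---where it is easy to misplace a factor.
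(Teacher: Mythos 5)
Your proposal follows exactly the paper's route: split off $K_n(W_0;x,x)$ and evaluate it by \eqref{limitK}, then handle the difference via \eqref{Kn-Kn} together with the limit of $\binom{n+d}{n}^{-1}\sum_{i=1}^{d+1}\bigl[P_n^{(d/2,-1/2)}(2x_i-1)\bigr]^2$ (zero off the vertices by Lemma~\ref{lem:3.2}, a single surviving term $\bigl[P_n^{(d/2,-1/2)}(1)\bigr]^2=\binom{n+d/2}{n}^2$ at a vertex), which is precisely the two-step deduction the paper carries out in the lines preceding the corollary. The only step you defer as ``constant bookkeeping'' is indeed the one delicate point, and you are right to warn about a misplaced factor there: Stirling gives $\binom{n+d/2}{n}^2/\binom{n+d}{n}\to d!/\Gamma(d/2+1)^2$, which is not $1$, so that factor has to be absorbed into $E_d$ via the duplication formula --- a normalization the paper itself glosses over when it asserts the displayed limit equals $1$ on $T_0^d$.
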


Comparing to \eqref{limitK}, the result shows the impact of the additional mass points at
the vertices. More generally, if $\kappa_i = \varsigma > 0$, then \eqref{Kn-Kn} shows that
\begin{align*}
 K_n(d\nu;x,x) - K_n(W_\k;x,x) = & c_{\l,\varsigma} \left (1 + \CO(n^{-1}) \right) \\
    & \times \begin{cases} \binom{n + \l - \varsigma  -1/2}{n}, & x \in T_0^d, \\
     2^k \binom{n + \varsigma  -1/2}{n},  & x \in T_k^d, \,\,1\le k \le d-1, \end{cases}
\end{align*}
since, for $x\in T^d$, $x_i=1$ only when $x = e_i$. In particular, we see that
$$
   \lim_{n \to \infty} \frac{1}{\binom{n+d}{n}} \left[K_n(d\nu;x,x) - K_n(W_\k;x,x) \right] = 0,
     \quad x \in T_k^d, \quad 1 \le k \le d,
$$
if $d > 2 \varsigma -1$, whereas this limit is unbounded when $x \in T_0^d$.

\end{document}